\numberwithin{equation}{section}
\newtheorem{cor}[equation]{Corollary}
\newtheorem{lem}[equation]{Lemma}
\newtheorem{prop}[equation]{Proposition}
\newtheorem{thm}[equation]{Theorem}
\newtheorem{quest}[equation]{Question}
\newtheorem{defn}[equation]{Definition}
\newtheorem{Example}[equation]{Example}
\newtheorem{remark}[equation]{Remark}
\newenvironment{rmk}{\begin{remark}\rm}{\end{remark}}
\def\co{\colon\thinspace}
\newcommand{\vol}{\mbox{vol}}
\newcommand{\e}{\epsilon}
\def\G{\Gamma}
\def\g{\gamma}
\def\L{\Lambda}
\def\d{\partial}
\def\r{\rho}
\def\l{\lambda}
\def\Z{\mathbb{Z}}
\def\S1{\bf S^1}
\newcommand{\ddr}{{\d_r}}
\newcommand{\R}{{\mathbb R}}
\newcommand{\C}{{\mathbb C}}
\newcommand{\rhn}{{\mathbf{H}^n}}
\newcommand{\chn}{{\mathbf{CH}^n}}
\newcommand{\chtwo}{{\mathbf{CH}^2}}
\newcommand{\chm}{{\mathbf{CH}^{n-1}}}
\newcommand{\rhm}{{\mathbf{H}^{n-2}}}
\newcommand{\rhtwo}{{\mathbf{H}^{2}}}
\newcommand{\cV}{{\mathcal V}}
\newcommand{\cH}{{\mathcal H}}
\begin{document}

\abovedisplayskip=6pt plus3pt minus3pt
\belowdisplayskip=6pt plus3pt minus3pt

\title[Real hyperbolic hyperplane complements]
{\bf Rigidity and relative hyperbolicity of real hyperbolic
hyperplane complements}

\thanks{\it 2000 Mathematics Subject classification.\rm\ Primary
20F65. Secondary 57R19, 22E40.
Keywords: relatively hyperbolic, hyperplane arrangements,
hyperbolic geometry, Mostow rigidity.}\rm

\author{Igor Belegradek}
\address{Igor Belegradek\\School of Mathematics\\ Georgia Institute of
Technology\\ Atlanta, GA 30332-0160}\email{ib@math.gatech.edu}
\thanks{This work was partially supported by the NSF grant
\# DMS-0503864.}
\date{}
\begin{abstract} 
For $n>3$ we study spaces obtained from finite 
volume complete real hyperbolic $n$-manifolds by removing a compact 
totally geodesic submanifold of codimension two.
We prove that their fundamental groups are relative hyperbolic,
co-Hopf, biautomatic, residually hyperbolic, not K\"ahler,
not isomorphic to lattices in virtually connected real Lie groups,
have no nontrivial subgroups with property (T), 
have finite outer automorphism groups,
satisfy Mostow-type Rigidity,
have finite asymptotic dimension and rapid decay property,
and satisfy Baum-Connes conjecture. 
We also characterize those lattices in real Lie groups 
that are isomorphic to relatively hyperbolic groups.  
\end{abstract}\maketitle
\begin{center}\it
Dedicated to Thomas Farrell and Lowell Jones\rm
\end{center}

\section{Introduction}

Let $M$ be a (connected) complete finite volume
locally symmetric Riemannian manifold of negative sectional curvature, 
and let $S$ be a (possibly disconnected) 
compact totally geodesic submanifold of $M$ of codimension two.
As noted in Lemma~\ref{lem: morse}, 
the manifold $M\setminus S$ is aspherical, and 
Theorems~\ref{thm: intro-gr-theoretic-cor}--\ref{thm: intro-mostow}
indicate that the group $\pi_1(M\setminus S)$ shares
various rigidity properties with lattices in rank one semisimple Lie groups.
There are good reasons to focus on codimension two:
if $S$ had codimension one, then $M\setminus S$ would 
be a familiar object, namely,  
the interior of a compact real hyperbolic manifold with
totally geodesic boundary obtained from $M$ by cutting open along $S$, 
while if $S$ had codimension $>2$, then $M\setminus S$ 
would no longer be aspherical, and the groups
$\pi_1(M\setminus S)$ and $\pi_1(M)$ would be isomorphic. 

It is known that the pair $(M,S)$ is modelled on 
$(\chtwo, \rhtwo)$, $(\rhn, \rhm )$, or $(\chn, \chm)$, 
where $\chn$, $\rhn$ denotes
the hyperbolic $n$-space over $\C$, $\R$, respectively.
The purpose of this paper is to study various topological 
and group-theoretic properties of $M\setminus S$ 
in the simplest case
when $(M,S)$ is modelled on $(\rhn, \rhm )$.
Most results proved here extend to the case $(\chn, \chm)$, 
yet some of the proofs become much harder, and this case 
shall be treated in~\cite{Bel-ch-warp}. 
While I believe that the arguments in the case $(\chn, \chm)$ can be modified
to hold in the ``exceptional'' case $(\chtwo, \rhtwo)$, 
this remains to be checked.

Clearly $M\setminus S$ can be identified 
with the interior of a compact smooth manifold $N$ that
is obtained from $M$ by removing a tubular neighborhood of $S$
and chopping off all cusps (in case $M$ is noncompact). 
There are two kinds of components of $\d N$:
compact flat manifolds appearing as cusp cross-sections of $M$
(appearing if $M$ is noncompact),
and virtually trivial circle bundles over components of $S$, 
which are locally isometric to the Riemannian product of $S$ and 
a round circle. 
It follows from warped product computations of Heintze and
Schroeder~\cite{Sch} that $N$ admits a Riemannian metric of $\sec\le 0$
such that $\sec< 0$ on the interior of $N$, and the boundary
$\d N$ is totally geodesic (see Remark~\ref{rmk: HSF metrics}). 
In particular, the group $\pi_1(N)$ is $CAT(0)$. 
Furthermore, by compactness of $N$
the Heintze-Schroeder metric is $A$-regular, so deep work
of Farrell-Jones~\cite{FJ-A-reg} implies that
that $\pi_1(N)$ satisfies Borel's Conjecture.

Results of this paper are summarized below.

\begin{thm}\label{thm: intro-gr-theoretic-cor}
Let $M$ be a complete finite volume real hyperbolic $n$-manifold, 
and $S$ is a compact totally geodesic submanifold of codimension two.
Then \newline
$\mathrm{(1)}$ 
$\pi_1(N)$ is non-elementary (strongly) relatively hyperbolic, 
where the
peripheral subgroups are fundamental groups of the components
of $\d N$.
\newline
$\mathrm{(2)}$ the relatively hyperbolic boundary of $\pi_1(N)$ 
is the $(n-1)$-sphere.\newline
$\mathrm{(3)}$ if $n>2$, then 
$\pi_1(N)$ does not split as an amalgamated product
or an HNN-extension over subgroups of  
peripheral subgroups of $\pi_1(N)$, or over $\Z$.\newline
$\mathrm{(4)}$  if $n>2$, then $\pi_1(N)$ is co-Hopf.\newline
$\mathrm{(5)}$ for any finite subset $S\subset\pi_1(N)$  there is a homomorphism
of $\pi_1(N)$ onto a non-elementary hyperbolic group  that is injective on $S$.\newline
$\mathrm{(6)}$ $\pi_1(N)$ satisfies Strong Tits Alternative.\newline
$\mathrm{(7)}$ $\pi_1(N)$ is biautomatic.\newline
$\mathrm{(8)}$  No nontrivial subgroup of $\pi_1(N)$ has Kazhdan property 
\textup{(T)}. \newline
$\mathrm{(9)}$ $\pi_1(N)$ is not a K\"ahler group.
\newline
$\mathrm{(10)}$ $\pi_1(N)$ has finite asymptotic dimension.\newline
$\mathrm{(11)}$ $\pi_1(N)$ has rapid decay property.\newline
$\mathrm{(12)}$ $\pi_1(N)$ satisfies Baum-Connes conjecture.\newline
$\mathrm{(13)}$ if $n>3$, then
$\pi_1(N)$ is not isomorphic to the fundamental group of a 
complete negatively pinched Riemannian manifold.\newline
$\mathrm{(14)}$ 
if $\pi_1(N)$ is isomorphic to a lattice $\Lambda$
in a real Lie group $G$, then the identity component $G_0$ 
of $G$ is compact, $\Lambda\cap G_0$ is trivial, and $\Lambda$ 
projects isomorphically onto a finite index subgroup of $G/G_0$.
\end{thm}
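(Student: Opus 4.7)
The strategy is to prove part (1) first and then derive the bulk of the remaining statements from general theorems on relatively hyperbolic groups.  For (1), the plan is to exploit the Heintze-Schroeder metric on $N$ already mentioned in the introduction.  Lifting to the universal cover $\tilde N$, each boundary component is a convex subspace isometric either to Euclidean $\R^{n-1}$ (for a lift of a cusp cross-section) or to $\rhm\times\R$ (for a lift of the boundary of a tubular neighborhood of a component of $S$).  Reattaching the geometric horoballs of $\rhn$ to the Euclidean components and the metric tubular neighborhoods of the preimages of $S$ to the other components rebuilds a convex region of $\rhn$, which is Gromov hyperbolic.  Via the Groves-Manning / Bowditch characterization of relative hyperbolicity, the cocompact action of $\pi_1(N)$ on $\tilde N$, with the boundary components as the system of peripheral subspaces, then exhibits $\pi_1(N)$ as relatively hyperbolic with peripheral subgroups the fundamental groups of the components of $\d N$.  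Non-elementarity is automatic for $n>3$, and part (2) follows because the Bowditch boundary of such a cusped geometric action on $\rhn$ is its visual boundary $S^{n-1}$.  This step is the main obstacle: the ``refill to recover $\rhn$'' picture is intuitive, but verifying the Groves-Manning / Bowditch hypotheses requires careful bookkeeping that the peripheral cosets in $\pi_1(N)$ are in bijection with the components of $\d\tilde N$, that their stabilizers are exactly the claimed peripheral subgroups, and that malnormality and the fineness / bounded coset penetration property hold.

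Given (1), parts (3)--(12) follow by feeding the description of the peripheral subgroups (virtually abelian for cusps, virtually $\Z\times\pi_1(S_i)$ for tubes, where $S_i$ is a closed hyperbolic $(n-2)$-manifold) into standard machinery.  Bowditch's JSJ-type splitting theory for relatively hyperbolic groups handles (3); one-endedness together with the absence of peripheral splittings handles co-Hopfness (4); Osin / Groves-Manning relatively hyperbolic Dehn filling, applied to deep finite-index subgroups of the peripherals avoiding a prescribed finite set, produces hyperbolic quotients for (5); the strong Tits alternative for rel.\ hyp.\ groups whose peripherals satisfy it handles (6); Rebbechi's biautomaticity theorem gives (7); amenability of peripherals rules out infinite property (T) subgroups via Osin (8); (10) follows from Osin's theorem on asymptotic dimension of rel.\ hyp.\ groups; (11) follows from Dru\c tu-Sapir / Chatterji-Ruane on rapid decay; and (12) follows from Mineyev-Yu / Lafforgue on Baum-Connes.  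Part (9) is the most delicate consequence, since the typical K\"ahler obstructions for rel.\ hyp.\ groups assume hyperbolic peripherals; here one combines the splittings produced in (3) with Delzant-Gromov / Napier-Ramachandran restrictions to rule out a K\"ahler structure.

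Parts (13) and (14) require additional geometric input.  For (13), the tube peripheral subgroup contains a copy of $\Z\times\pi_1(S_i)$; for $n>3$ this is not virtually nilpotent, and since parabolic subgroups of pinched negatively curved manifold groups are virtually nilpotent by Heintze-Margulis, $\pi_1(N)$ cannot be such a group.  For (14), suppose $\pi_1(N)\cong\Lambda\le G$.  Then $\Lambda\cap G_0$ is normal in $\Lambda$ and injects into a connected real Lie group; part (1) implies that any infinite normal subgroup of a non-elementary relatively hyperbolic group either is contained in a peripheral subgroup or contains a loxodromic element, and combining this with torsion-freeness of $\pi_1(N)$, with the classification of lattices in connected real Lie groups, and with the fact that the peripherals of $\pi_1(N)$ are not themselves lattices in connected real Lie groups forces $\Lambda\cap G_0$ to be trivial.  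The claimed structure of $\Lambda$ and compactness of $G_0$ then follow by projecting to $G/G_0$ and using that this projection must realize $\Lambda$ as a lattice in a real Lie group with trivial identity component.
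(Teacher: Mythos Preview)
Your outline has the right shape for most of the ``standard machinery'' parts, but there are genuine gaps in several places.

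\textbf{Part (1)--(2).} The ``refill $\rhn$'' picture does not yield a proper action. The meridian loop around a component of $S$ is a nontrivial element of $\pi_1(N)$ that lies in the kernel of $\pi_1(N)\to\pi_1(M)$, so once you reattach the tubes and recover $\rhn$, this element acts trivially. Thus $\pi_1(N)$ does not act properly discontinuously on $\rhn$, and the Bowditch/Groves--Manning setup does not apply to that action. Nor does the Heintze--Schroeder action on $\tilde N$ work directly, since $\tilde N$ is only $CAT(0)$, not Gromov hyperbolic. The paper's main argument avoids this by building a \emph{new} complete metric on $M\setminus S$ with $\sec\le -c<0$ (a warped-product modification near $S$, with $h=v=e^r$ far out); in that metric the ends approaching $S$ become genuine cusp ends whose lifts are horoballs, and then Bowditch's criterion applies to the free, proper $\pi_1(N)$-action on this new Hadamard manifold. (The paper also records, in an appendix, an alternative route via Kapovich--Leeb tree-graded asymptotic cones plus Dru\c tu--Osin--Sapir, which is closer in spirit to working with the Heintze--Schroeder metric but still requires real work.)

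\textbf{Part (8).} The tube peripherals are virtually $\Z\times\pi_1(S_i)$ with $\pi_1(S_i)$ a closed hyperbolic $(n-2)$-manifold group; for $n>3$ these are \emph{not} amenable, so ``amenability of peripherals'' is false. One can still push a relatively-hyperbolic argument through by showing the peripherals contain no infinite (T) subgroup, but the paper instead uses a cleaner trick: the kernel of $\pi_1(M\setminus S)\to\pi_1(M)$ is free (a Morse-theory lemma), and any (T) subgroup must lie in that kernel because (T) groups have fixed points on real hyperbolic spaces; a free (T) group is trivial.

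\textbf{Part (9).} You invoke ``splittings produced in (3)'', but (3) is precisely a \emph{non}-splitting statement, so this line cannot work. The paper's argument is specific: a putative K\"ahler manifold $K$ with $\pi_1(K)\cong\pi_1(N)$ admits a harmonic map to $M$ factoring through a Riemann surface $\Sigma$ (Carlson--Toledo / Jost--Yau), and one then traces the image of a $\pi_1(\bar C)$ subgroup through $\Sigma$ to force $\dim S\le 2$, handling the low-dimensional leftovers separately.

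\textbf{Part (14).} Your sketch is too loose; in particular ``the peripherals of $\pi_1(N)$ are not themselves lattices in connected real Lie groups'' is not what is needed and is not obviously true. The paper proves a separate structure theorem (its Theorem~\ref{thm: rh lattices}) classifying which lattices in real Lie groups can be non-elementary relatively hyperbolic, using the Margulis Normal Subgroup Theorem together with Osin's result that non-elementary relatively hyperbolic groups always have infinite normal subgroups of infinite index; (14) then follows by combining that classification with (13).

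The remaining items (3)--(7), (10)--(13) are broadly in line with the paper's approach, though for (3)--(4) the paper cites a specific result of the author rather than general JSJ theory, and for (12) it uses Lafforgue's theorem for $A$-regular nonpositively curved manifolds (via the Heintze--Schroeder metric) rather than Mineyev--Yu.
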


The following is a version of Mostow rigidity.

\begin{thm}\label{thm: intro-mostow} 
For $n>2$ and $i=1,2$, suppose that $M_i$ is a complete finite volume
real hyperbolic $n$-manifold, and $S_i$ is a compact
totally geodesic submanifold of codimension two. Then
any homotopy equivalence $f\co M_1\setminus S_1\to M_2\setminus S_2$
induces an isometry $\iota_f\co M_1\to M_2$
taking $S_1$ to $S_2$ such that the restriction
$\iota_f\co M_1\setminus S_1\to M_2\setminus S_2$
is homotopic to $f$. Moreover, $\iota_f$ is uniquely
determined by the homotopy class of $f$.
\end{thm}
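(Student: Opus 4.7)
The plan is to reduce this to Mostow rigidity for the ambient finite-volume hyperbolic $n$-manifolds $M_1$ and $M_2$: specifically, to show that $\phi = f_*\co \pi_1(M_1\setminus S_1)\to \pi_1(M_2\setminus S_2)$ descends to an isomorphism $\bar\phi\co \pi_1(M_1)\to \pi_1(M_2)$, let Mostow rigidity furnish a unique isometry $\iota_f\co M_1\to M_2$ inducing $\bar\phi$, and then verify that $\iota_f(S_1)=S_2$ and that $\iota_f|_{M_1\setminus S_1}$ is homotopic to $f$.

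First I would exploit the relative hyperbolicity provided by Theorem~\ref{thm: intro-gr-theoretic-cor}(1). The peripheral subgroups of $\pi_1(N_i)$ are of two kinds: \emph{cusp peripherals}, virtually $\Z^{n-1}$, and \emph{$S$-peripherals}, virtually $\pi_1(\Sigma)\times\Z$ for a component $\Sigma$ of $S_i$ (the $\Z$-factor being the circle-bundle fiber). Each such peripheral is either amenable or has infinite center, so in particular is not itself properly relatively hyperbolic, and standard canonicity results for relatively hyperbolic structures then guarantee that $\phi$ sends each peripheral subgroup of $\pi_1(N_1)$ to a conjugate of a peripheral subgroup of $\pi_1(N_2)$.

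Next I would identify the meridian subgroup---the $\Z$-subgroup generated by a small loop around a component of $S_i$---algebraically. For $n\ge 4$, cusp peripherals are virtually abelian while $S$-peripherals contain the non-elementary hyperbolic group $\pi_1(\Sigma)$, so $\phi$ matches peripherals by type, and inside each $S$-peripheral the meridian coincides with the center, hence is preserved by $\phi$. For $n=3$ both types of peripherals are virtually $\Z^2$, so a separate argument is needed; one may invoke Waldhausen's theorem (since $N_i$ is Haken with incompressible torus boundary) to promote $f$ to a homeomorphism identifying the cusp and $S$-boundary components and their meridians, or alternatively use the cusped finite-volume hyperbolic structure on $M_i\setminus S_i$ produced by hyperbolic Dehn surgery. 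In all cases $\phi$ carries the normal closure of meridians in $\pi_1(N_1)$ onto the normal closure of meridians in $\pi_1(N_2)$, so it descends to an isomorphism $\bar\phi\co \pi_1(M_1)\to \pi_1(M_2)$.

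By Mostow rigidity (valid for $n\ge 3$), $\bar\phi$ is realized by a unique isometry $\iota_f\co M_1\to M_2$. To verify $\iota_f(S_1)=S_2$, I would pass to a $\bar\phi$-equivariant isometric lift on $\rhn$: each component of $S_1$ lifts to a union of totally geodesic $\rhm$'s, each stabilized by a conjugate of the image in $\pi_1(M_1)$ of an $S$-peripheral, and by the peripheral matching these are carried to the $\rhm$'s projecting to components of $S_2$. Finally, both $f$ and $\iota_f|_{M_1\setminus S_1}$ induce $\phi$ on $\pi_1$ (after adjusting basepoints), so by asphericity of $M_i\setminus S_i$ (Lemma~\ref{lem: morse}) standard obstruction theory gives the required homotopy; uniqueness of $\iota_f$ is inherited from uniqueness in Mostow rigidity. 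The delicate points are the canonicity of the peripheral structure and the algebraic identification of meridians, particularly when $n=3$.
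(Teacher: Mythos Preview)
Your proposal is correct and follows essentially the same route as the paper: use relative hyperbolicity to force $\phi$ to respect the peripheral structure, identify the meridian subgroup algebraically inside each $S$-peripheral, pass to an isomorphism $\bar\phi\co\pi_1(M_1)\to\pi_1(M_2)$, apply Mostow--Prasad rigidity, and then verify $\iota_f(S_1)=S_2$ by lifting to $\rhn$ and arguing that two totally geodesic $(n-2)$-planes within finite Hausdorff distance coincide.

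The differences are cosmetic. Rather than quotienting by the normal closure of meridians, the paper attaches $2$-disks along fiber circles to build $N_i^d$ and identifies $\pi_1(N_i^d)\cong\pi_1(M_i)$ via Van Kampen; this has the advantage that the extension $f^d$ is visibly built from $f$, so the homotopy between $\iota_f|_{M_1\setminus S_1}$ and $f$ needs less bookkeeping than your obstruction-theory step (where the claim that $\iota_f|$ induces exactly $\phi$, not merely a lift of $\bar\phi$, deserves a sentence). The paper characterizes the meridian uniformly as the \emph{maximal infinite cyclic normal subgroup} of the $S$-peripheral, which is slightly more robust than your ``center'' description when the circle bundle is non-orientable. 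On the other hand, your explicit case-split at $n=3$ via Waldhausen or the hyperbolic structure on the link complement is more careful than the paper, since when the peripheral is virtually $\Z^2$ there is no distinguished maximal cyclic normal subgroup and the paper's uniform argument is terse at that point.
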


\begin{cor}\label{cor: intro-out}
For $n>2$, if $M$ is a complete finite volume
real hyperbolic manifold, and $S$ is a compact
totally geodesic submanifold of codimension two, then
the correspondence $f\to \iota_f$ induces an isomorphism
of the outer automorphism group of $\pi_1(M\setminus S)$
onto the group of isometries of $M$ that map $S$ to itself. 
In particular, the outer automorphism group of $\pi_1(M\setminus S)$
is finite.
\end{cor}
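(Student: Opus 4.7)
The plan is to deduce everything from Theorem~\ref{thm: intro-mostow} once one checks that the assignment $f\mapsto \iota_f$ descends to outer automorphisms and is a homomorphism. Since $M\setminus S$ is aspherical (Lemma~\ref{lem: morse}), self-homotopy equivalences of $M\setminus S$ up to homotopy correspond bijectively to elements of $\mathrm{Out}(\pi_1(M\setminus S))$, so it suffices to work with homotopy classes. The uniqueness clause in Theorem~\ref{thm: intro-mostow} shows that if $f_1$ is homotopic to $f_2$ then $\iota_{f_1}=\iota_{f_2}$, and since inner automorphisms are realized by the identity map, this gives a well-defined map $\Phi\co\mathrm{Out}(\pi_1(M\setminus S))\to\Iso(M,S)$, where $\Iso(M,S)$ denotes the group of isometries of $M$ carrying $S$ to itself.

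Next I would verify that $\Phi$ is a group homomorphism. Given self-homotopy equivalences $f,g$ of $M\setminus S$, the isometry $\iota_f\circ\iota_g$ is an element of $\Iso(M,S)$ whose restriction to $M\setminus S$ is homotopic to $f\circ g$; uniqueness in Theorem~\ref{thm: intro-mostow} forces $\iota_{f\circ g}=\iota_f\circ\iota_g$. Surjectivity of $\Phi$ is essentially tautological: any $\psi\in\Iso(M,S)$ restricts to a self-diffeomorphism $f$ of $M\setminus S$, and uniqueness applied to $f$ gives $\iota_f=\psi$. For injectivity, if $\iota_f=\mathrm{id}_M$ then $f$ is homotopic to the identity on $M\setminus S$, so the outer class of $f$ is trivial.

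It remains to observe that $\Iso(M,S)$ is finite. The isometry group of any finite volume real hyperbolic manifold of dimension $\ge 2$ is finite (it is a compact Lie group that is also discrete by Mostow rigidity, or directly because the indivisible elements of the associated lattice form a finite normal extension); the subgroup stabilizing the compact submanifold $S$ is therefore finite as well. The only minor subtlety is the well-definedness check on inner automorphisms and the compatibility of homotopies with the uniqueness clause, but these are immediate from the statement of Theorem~\ref{thm: intro-mostow}; no additional input is needed beyond that theorem and asphericity.
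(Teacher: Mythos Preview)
Your argument is correct and is exactly the approach the paper intends: the paper's own proof reads in full ``immediately follows from Theorem~\ref{thm: intro-mostow}, where the last assertion \dots\ holds as the isometry group of $M$ is finite by Mostow rigidity,'' and you have simply spelled out the routine verifications (well-definedness, homomorphism, bijectivity) that the word ``immediately'' is covering. One minor quibble: your parenthetical justification for finiteness of $\Iso(M)$ via ``indivisible elements of the associated lattice'' is garbled, and the appeal to Mostow rigidity needs $n\ge 3$; since the corollary assumes $n>2$ this is harmless, but it would be cleaner to just say that for $n\ge 3$ Mostow rigidity identifies $\Iso(M)$ with $\mathrm{Out}(\pi_1(M))$, which is finite because the normalizer of a lattice in $\mathrm{Isom}(\rhn)$ is discrete and contains the lattice with finite index.
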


A key technical result of this paper is part (1) of
Theorem~\ref{thm: intro-gr-theoretic-cor}.
Similarly to Heintze-Schroeder computation, we show that 
$M\setminus S$ admits a finite volume complete Riemannian metric
whose sectional curvature is bounded above by a negative constant.
Then (1) and (2) of Theorem~\ref{thm: intro-gr-theoretic-cor}
follow from the closer look at properties of the metric;
in fact it satisfies Gromov's definition of relative hyperbolicity,
as elaborated in~\cite{Bow-rel}.
It is worth mentioning that previously
Fujiwara~\cite{Fuj} constructed
a finite volume complete Riemannian metric
on $M\setminus S$
with sectional curvatures within $[-1,0)$,
but it is unclear to me how to use Fujiwara's metric
to prove relative hyperbolicity.

Alternatively, part (1) of Theorem~\ref{thm: intro-gr-theoretic-cor}
can be deduced by combining results of Heintze, Schroeder~\cite{Sch},
Kapovich-Leeb~\cite{KapLee}, and Dru{\c{t}}u-Osin-Sapir~\cite{DOS}.
Details are given in Appendix~\ref{sec: another proof of rh},
but briefly the universal cover $\tilde N$ of $N$ with
Heintze-Schroeder's metric is a $CAT(0)$ space 
that has negative curvature away from $\d \tilde N$,
and path components of $\d \tilde N$ are isometric either 
to $\mathbb R^{n-1}$ or $\mathbb R\times\rhm$.
Then an argument of Kapovich-Leeb~\cite{KapLee} shows that
$\tilde N$ is asymptotically tree-graded with respect to 
$\d \tilde N$, which by results of 
Dru{\c{t}}u-Osin-Sapir~\cite{DOS}
implies that $\pi_1(N)$ is hyperbolic relative
the fundamental groups of components of $\d N$.
It is unclear to me whether this proof also gives 
part (2) of Theorem~\ref{thm: intro-gr-theoretic-cor}. 

Part (14) of Theorem~\ref{thm: intro-gr-theoretic-cor}
follows from the following characterization 
of lattices in Lie groups that are relatively hyperbolic.

\begin{thm}\label{thm: rh lattices}
Let $G$ be a real Lie group with identity component $G_0$,
let $\Lambda$ be a lattice in $G$. 
Then $\Lambda$ is isomorphic to a non-elementary 
relatively hyperbolic group if and only if 
one of the following is true:\newline
\textup{(i)} $G_0$ is compact and
$G/G_0$ is isomorphic to a non-elementary relatively hyperbolic group;
\newline
\textup{(ii)}
$G/G_0$ is finite, and $G$ contains a compact normal subgroup $K$
such that $K\subset G_0$ and $G_0/K$ is 
a simple noncompact $\mathbb R$-rank one Lie group with trivial center. 
\end{thm}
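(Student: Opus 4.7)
The plan is to prove the two implications separately, relying throughout on Mostow's structure theorem for lattices in Lie groups (see Raghunathan, Ch.~I): for any lattice $\Lambda$ in $G$, the intersection $\Lambda_0:=\Lambda\cap G_0$ is a lattice in $G_0$, and $\Lambda/\Lambda_0$ is a finite-index subgroup of $G/G_0$. Sufficiency is routine. In case (i), $\Lambda_0$ is finite (being discrete in the compact $G_0$), so $\Lambda$ is commensurable to $G/G_0$; non-elementary relative hyperbolicity is preserved under finite extensions and passage to finite-index subgroups. In case (ii), $\Lambda\cap K$ is finite and $\Lambda/(\Lambda\cap K)$ is a lattice in $G/K$, whose identity component is a simple noncompact rank-one Lie group with trivial center and whose component group is finite; standard rank-one hyperbolic geometry shows such a lattice is either word hyperbolic (uniform case) or hyperbolic relative to its cusp subgroups (non-uniform case), and the finite extension to $\Lambda$ preserves the property.

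For necessity, if $G_0$ is compact then (i) follows immediately: the finite normal subgroup $\Lambda_0$ of $\Lambda$ is quotiented out, and $\Lambda/\Lambda_0$ embeds as a finite-index subgroup of the discrete group $G/G_0$, forcing $G/G_0$ to be non-elementary relatively hyperbolic. So assume $G_0$ is noncompact; I derive (ii). I will use three properties of a non-elementary relatively hyperbolic group $\Gamma$: (a) $\Gamma$ contains no infinite amenable normal subgroup (Osin); (b) $\Gamma$ does not split as a direct product of two infinite factors; (c) $\Gamma$ admits no quasi-isometrically embedded Euclidean $2$-plane that stays outside every peripheral coset up to finite Hausdorff distance (Dru{\c{t}}u--Sapir).

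Let $R$ be the solvable radical of $G_0$. By Mostow's decomposition theorem, $\Lambda_0\cap R$ is a lattice in $R$; since $R$ is characteristic in $G_0\triangleleft G$, this lattice is normal and solvable in $\Lambda$, and (a) forces it to be finite, hence $R$ compact. Decomposing the semisimple $G_0/R$ into simple factors, if more than one were noncompact then projecting the lattice $\Lambda_0 R/R$ into the noncompact part would yield a lattice that is either reducible (violating (b) via a virtual direct-product decomposition of $\Lambda$) or irreducible in a symmetric space of $\mathbb R$-rank $\geq 2$ (violating (c) via a $2$-dimensional quasi-flat). The same flat argument pins the unique noncompact simple factor $S'$ to $\mathbb R$-rank one.

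To conclude, let $K\subset G_0$ be the preimage under $G_0\to G_0/R$ of $C\cdot Z(S')$, where $C$ denotes the product of compact simple factors of $G_0/R$. Compactness of $K$ demands $Z(S')$ finite; this is forced because an infinite $Z(S')$ would contribute an infinite central cyclic subgroup to $\Lambda$, whereas a non-elementary relatively hyperbolic group has finite center (an infinite central loxodromic element would globally fix its pair of fixed points on the relatively hyperbolic boundary, and an infinite central parabolic would force its peripheral subgroup to be normal by almost-malnormality, each contradicting non-elementarity). Then $G_0/K\cong S'/Z(S')$ is the desired simple noncompact rank-one Lie group with trivial center; since $K$ is characteristic in $G_0$, it is normalized by all of $G$. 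Finally, $G/G_0$ is finite because the conjugation map $G\to\operatorname{Aut}(G_0/K)$ has image with finite component group (the $\operatorname{Out}$-group of a simple Lie group), while the identity component embeds into $\operatorname{Inn}(G_0/K)\cong G_0/K$; combined with discreteness of $\Lambda$ and the Mostow finite-index statement, this pins $G/G_0$ down. I expect the main obstacles to be the infinite-center subcase (for which one must preclude exotic lifts such as a universal cover of $\mathrm{PSL}_2(\mathbb R)$-type factor) and the final finiteness of $G/G_0$, both of which require careful bookkeeping of characteristic subgroups and outer automorphism structure in the ambient Lie group.
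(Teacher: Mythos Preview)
Your overall architecture matches the paper's: handle (i) via commensurability, and for noncompact $G_0$ pass to a semisimple quotient, kill compact factors and center, then pin down rank one and finiteness of $G/G_0$. But three steps diverge from the paper in ways that matter.

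\textbf{Radical step.} You invoke ``Mostow's decomposition theorem'' to assert that $\Lambda_0\cap R$ is a lattice in $R$. That statement is not automatic for arbitrary connected $G_0$; the standard versions require the Levi factor to have no compact factors, which you have not yet established. The paper instead argues in the reverse order: first $R\cap\Lambda$ is finite because it is solvable normal in $\Lambda$; then Auslander's theorem \cite[Theorem~8.24]{Rag-book} forces $p(\Lambda)$ to be discrete in $G/R$ (else the identity component of its closure would pull back to an infinite solvable normal subgroup of $\Lambda$); discreteness gives $R\Lambda$ closed, whence $R\cap\Lambda$ is a lattice in $R$ by \cite[Theorem~1.13]{Rag-book}, and only then does one conclude $R$ compact. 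Your shortcut needs this justification.

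\textbf{Rank-one step.} Here the paper takes a genuinely different route. You use property~(c), that a non-elementary relatively hyperbolic group admits no quasi-isometrically embedded $2$-plane outside all peripheral cosets. But the theorem concerns \emph{some} relatively hyperbolic structure on $\Lambda$, so you must rule out the possibility that every flat lies near a peripheral coset. That requires an additional argument (essentially the ``thickness'' of higher-rank lattices, as in Behrstock--Dru\c{t}u--Mosher, which the paper mentions in a remark). The paper avoids this entirely: it cites Osin's result (Proposition~\ref{prop: norm subg rh}) that every non-elementary relatively hyperbolic group has an infinite normal subgroup of infinite index, and then Margulis's Normal Subgroup Theorem immediately excludes higher rank. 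This is shorter and needs no quasi-isometric geometry.

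\textbf{Finiteness of $G/G_0$.} You correctly anticipate trouble here. Your sketch via $G\to\mathrm{Aut}(G_0/K)$ does not control the kernel and does not obviously bound the component group. The paper's argument is group-theoretic: after passing to the centerless semisimple quotient $Q_0$, the extension $1\to\Gamma_0\to\Gamma\to\Gamma/\Gamma_0\to 1$ is classified by a homomorphism $\Gamma/\Gamma_0\to\mathrm{Out}(\Gamma_0)$; since $\mathrm{Out}(Q_0)$ is finite and $N_{Q_0}(\Gamma_0)/\Gamma_0$ is finite by Borel density, this homomorphism is virtually trivial, so $\Gamma$ is virtually $\Gamma_0\times(\Gamma/\Gamma_0)$, and then your property~(b) forces $\Gamma/\Gamma_0$ finite.
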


\begin{rmk} The assertions (i), (ii) of
Theorem~\ref{thm: rh lattices} do not mention $\L$, thus
relative hyperbolicity of lattices in $G$ can be read off $G$.
However, it is instructive to see where $\L$ is ``hidden'': 
if $\L_0:=\L\cap G_0$, then one shows that
\begin{itemize}
\item
$\L_0$ is a lattice in $G_0$,
\item 
$G_0$ is compact if and only if $\L_0$ is finite,
\item
$\L/\L_0$ is a finite index subgroup of $G/G_0$,
\item
the restriction of the projection $G\to G/K$ to $\L$ has finite kernel.
\end{itemize}
\end{rmk}

None of the results of this paper
is truly hard to prove and they draw heavily on
various (often deep) works available in the literature. A few harder 
questions are below.

Let $M$ be a compact locally symmetric irreducible
$n$-manifold of $\sec\le 0$
and $S$ is a compact totally geodesic submanifold of codimension two
(where we assume $n>3$, else the answers to the questions below
are known to be ``yes''). 

\begin{quest}
Is $\pi_1(M\setminus S)$ quasi-isometricaly rigid?
\end{quest}

Theorem~\ref{thm: intro-mostow} suggests that the answer might be ``yes''.
Recall that for irreducible nonuniform lattices in semisimple 
real Lie groups with finite center quasi-isometry implies commensurability
(as proved by Schwartz, Eskin et al, see~\cite{Far-lat} for details).

\begin{quest}\label{quest: rf}
Is $\pi_1(M\setminus S)$ Hopfian? residually finite? linear? 
\end{quest}

There is currently no general method of establishing Hopf property
for relatively hyperbolic groups. Nevertheless, 
since the peripheral subgroups
of $\pi_1(M\setminus S)$ are well-understood and quite rigid,
the methods  of~\cite{Sel, DruSap-rips} may suffice to imply
that $\pi_1(M\setminus S)$ is Hopf.

For $(M,S)$ modelled on $({\bf X}_n, {\bf X}_{n-1})$
where ${\bf X}_n$ is the symmetric spaces corresponding to
$SO(n,2)$, Toledo~\cite{Tol-nrf} used Raghunathan's work~\cite{Rag}
to show that if $n\ge 4$ and $n$ is even, then $\pi_1(M\setminus S)$
is not residually finite. 

If $\pi_1(M\setminus S)$ is hyperbolic relative to the fundamental 
groups of the ends of $M\setminus S$, as happens when
$(M,S)$ is modelled on $(\rhn, \rhm )$ or $(\chn, \chm)$,
then Part (5) of Theorem~\ref{thm: intro-gr-theoretic-cor}, 
which is based on relatively hyperbolic 
Dehn Surgery theorem~\cite{Osi-periph} (cf.~\cite{GroMan}),
implies that $\pi_1(M\setminus S)$ is residually hyperbolic.
In particular, if $\pi_1(M\setminus S)$ is
{\it not} residually finite, then there exists a 
hyperbolic group that is not residually finite, which illustrates
the difficulty of Question~\ref{quest: rf}.

{\bf Structure of the paper.} 
In Section~\ref{sec: rh cyl coord} we write the real hyperbolic 
metric in cylindrical coordinates and also show that $\d N$ is a 
virtually trivial circle bundle over $S$.
Section~\ref{sec: curv comp} contains a curvature computation,
which is then used in Section~\ref{sec: rel hyp}
to prove parts (1)-(2) of Theorem~\ref{thm: intro-gr-theoretic-cor}.
Theorems~\ref{thm: intro-gr-theoretic-cor}, \ref{thm: intro-mostow}, 
\ref{thm: rh lattices} are proved in
Sections~\ref{sec: gr-theor-cor}, \ref{sec: mostow}, \ref{sec: lat rh}, 
respectively.
Appendix~\ref{app: rel hyp} contains definitions and basic results on 
relatively hyperbolic groups. A Morse-theoretic lemma describing topology
of the universal cover of $M\setminus S$ is proved in
Appendix~\ref{app: morse}. In Appendix~\ref{app: components-of-curv-tensor}
we review (and also correct) 
some curvature computation for a  
multiply-warped product metric that were 
worked out in~\cite{BW}.
Appendix~\ref{sec: another proof of rh} contains another proof of
part (1) of Theorem~\ref{thm: intro-gr-theoretic-cor}.

\section{Real hyperbolic space in cylindrical coordinates}
\label{sec: rh cyl coord}

We denote the real hyperbolic metric on $\rhn$ by ${\bf h}_n$.
Given a totally geodesic subspace $\rhm\subset\rhn$,
denote the distance to $\rhm$ by $r$, and write ${\bf h}_n$
as $dr^2+\r_r$,
where $\r_r$ is the induced metric on the $r$-tube $F(r)$
around $\rhm$.
It is well-known that $\r_r$ is the Riemannian product
of $\sinh^2(r) d\theta^2$ and $\cosh^2(r){\bf h}_{n-2}$
where $d\theta^2$ is the round metric on the unit circle, 
denoted $\S1$.
This fact is crucial for what follows hence
we shall outline a proof (which apparently is 
not recorded in the literature). 

The orthogonal projection 
$\pi\co\rhn\to\rhm$ is a fiber bundle whose fibers
are totally geodesic $2$-planes. Restricting $\pi$ 
to $F(r)$ gives a circle bundle $F(r)\to\rhm$.
This defines a splitting of the tangent bundle
$TF(r)=\cV(r)\oplus\cH(r)$, where $\cV(r)$ is tangent
to the circle fibers, and $\cH(r)$ is the orthogonal complement
of $\cV(r)$. Fix a point $z\in F(r)$. The metric on $\cV(r)$
at $z$ can be computed inside the totally geodesic $2$-plane
$\pi^{-1}(\pi(z))$, and using the polar coordinates
description of the hyperbolic $2$-plane
$dr^2+\sinh^2(r) d\theta^2$, we conclude that
the induced metric on $\cV(r)$ is $\sinh^2(r) d\theta^2$.
A key feature of the real hyperbolic case is that
at each point $z\in F(r)$ the subbundle
$\cH(r)$ is tangent to the codimension
one totally geodesic subspace spanned by $\rhm$ and $z$.
Thus the induced metric on $\cH(r)$ at $z$ can be computed
inside this codimension one subspace whose metric 
can be written as $dr^2+\cosh^2(r){\bf h}_n$, so that
the induced metric on $\cH(r)$ at $z$ is $\cosh^2(r){\bf h}_n$.
Therefore, the induced metric in $F(r)$ is a Riemannian
submersion metric with base $\cosh(r)\rhm$ and fiber
$\sinh(r)$-multiple of the unit circle. 
In this metric the circle fibers are closed geodesics,
because this can be checked in  the totally geodesic
$2$-plane $\pi^{-1}(\pi(z))$, where it is a tautology
that any curve is a geodesic in the induced metric on itself.
The $A$-tensor of the 
Riemannian submersion vanishes because $\cH(r)$ is an integrable
distribution being tangent to the foliation obtained by
intersecting $F(r)$ with the codimension one totally geodesic
subspaces containing $\rhm$. Thus the Riemannian submersion 
is locally a product, and hence globally a product
as $\rhm$ is simply-connected.
In summary, the induced metric on $F(r)$ is the Riemannian
product of $\cosh(r)\rhm$ and $\sinh(r)$-multiple 
of the unit circle. The isometry group of $F(r)$
is the product of the isometry groups of the factors, i.e.
$\mathrm{Iso}(\rhm )\times O(2)$.

Now if $\Gamma$ a torsion-free discrete isometry group of $\rhn$
stabilizing a codimension two totally geodesic subspace $\rhm$, 
then $\Gamma$ acts isometrically, freely, and properly
discontinuously on $F(r)$.
Hence $F(r)/\Gamma$ is a flat Euclidean circle bundle over
$\cosh(r)$-multiple of $\rhm/\Gamma$, where
``Euclidean'' means that the holonomy of the flat connection
lies in $O(2)$. 

We shall need the following fact that does not seem to be 
in the literature.

\begin{prop} \label{prop: flat bundle}
If $X$ is a connected manifold with 
finitely generated fundamental group,
then any flat Euclidean circle bundle over $X$  
becomes trivial, as a $O(2)$-circle bundle, after passing
to a finite Galois cover. In particular, the fundamental
group of the total space has a finite index normal subgroup
isomorphic to $\Z\times\pi_1(X)$. 
\end{prop}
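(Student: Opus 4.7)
Let $\rho\co\pi_1(X)\to O(2)$ be the holonomy of the flat bundle. The plan is to find a finite index normal subgroup $H\leq\pi_1(X)$ such that on the corresponding finite Galois cover $\tilde X\to X$, the pullback bundle trivializes as an $O(2)$-circle bundle. Once this is achieved, the total space of the pullback is $\tilde X\times S^1$, its fundamental group is $\pi_1(\tilde X)\times\Z$, and this sits as the desired finite index normal subgroup of the fundamental group of the original total space.

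Since $\pi_1(X)$ is finitely generated, so is $\rho(\pi_1(X))\leq O(2)$. First intersect with the identity component $SO(2)$: this intersection has index at most two in $\rho(\pi_1(X))$ and is a finitely generated subgroup of $S^1$, hence isomorphic to $\Z^a\oplus F$ for some finite cyclic $F$. It contains a free abelian subgroup $A\cong\Z^a$ of finite index, so $H_0=\rho^{-1}(A)$ has finite index in $\pi_1(X)$; replacing it with its normal core $H$ produces a finite index normal subgroup of $\pi_1(X)$ with $\rho(H)\subseteq A\leq SO(2)=\R/\Z$.

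Because $A$ is free abelian, the inclusion $A\hookrightarrow\R/\Z$ lifts to a homomorphism $A\to\R$ (equivalently $\mathrm{Ext}^1(A,\Z)=0$), so composing with $\rho|_H$ gives a lift $\tilde\rho\co H\to\R$ of $\rho|_H$. The class of $\tilde\rho$ in $H^1(\tilde X;\R)$ is represented by a closed $1$-form whose primitive on the universal cover of $\tilde X$ is $\tilde\rho$-equivariant; reducing modulo $\Z$ yields the $\rho|_H$-equivariant map to $S^1$ that defines a section of the pullback bundle. Since the holonomy over $\tilde X$ lies in $SO(2)\leq O(2)$, the section trivializes the $SO(2)$-structure and hence the $O(2)$-structure, so the pullback bundle is $\tilde X\times S^1$ as $O(2)$-circle bundles.

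The main technical point is the passage to the normal core needed to make $\tilde X\to X$ Galois; the conceptual heart of the proof is that for finitely generated $\pi_1(X)$ the holonomy image virtually reduces to a torsion-free abelian subgroup of $SO(2)$, which always lifts to $\R$ and thereby trivializes the flat bundle.
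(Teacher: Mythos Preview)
Your proof is correct and takes a genuinely different route from the paper's. Both arguments first reduce to holonomy in $SO(2)$ and exploit that a finitely generated subgroup of $S^1$ is free-abelian-by-finite, but they diverge at the key step. The paper writes $H_1(X)=\Z^k\oplus T$, then \emph{deforms} the holonomy through a path in $\mathrm{Hom}(\pi_1(X),SO(2))$ sending the $\Z^k$-part to the identity while fixing the $T$-part; by the covering homotopy theorem the underlying $SO(2)$-bundle is constant along the path, and at the endpoint the holonomy has finite image, so the bundle trivializes on the cover corresponding to its kernel. You instead pass to a cover on which the holonomy image is already free abelian and \emph{lift} the holonomy to $\R$ using $\mathrm{Ext}^1(A,\Z)=0$; the lift produces an explicit $\rho|_H$-equivariant map to $S^1$, i.e.\ a section of the (now principal) $SO(2)$-bundle, which trivializes it. Your argument is more cohomological---the lifting step is exactly the vanishing of the Euler class as the Bockstein of the holonomy---and avoids the covering homotopy theorem, at the cost of invoking the de Rham isomorphism to realize $\tilde\rho$ by a closed $1$-form. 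The paper's argument is more hands-on and stays within bundle theory. One small remark: in your last paragraph the finite index normal subgroup you obtain is $\Z\times\pi_1(\tilde X)$ with $\pi_1(\tilde X)$ a finite index subgroup of $\pi_1(X)$, which is what the applications actually use; the paper's phrasing ``$\Z\times\pi_1(X)$'' is a mild imprecision.
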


\begin{proof} 
If  $\tilde X$ denotes the
universal cover of $X$, then any flat Euclidean 
circle bundle over $X$ is a  $\pi_1(X)$-quotient of
$\tilde X\times S^1$, where $\pi_1(X)$ acts by covering automorphisms on
the first factor, and via the holonomy homomorphism on the second factor.
Passing to the orientation cover of the bundle cover, 
we can assume that the bundle is orientable, while the fundamental
group of the base is still finitely generated. Orientable flat Euclidean 
circle bundles are classified by holonomy $\pi_1(X)\to SO(2)$,
so let $\varphi$ be the holonomy homomorphism of the flat structure.
Since $SO(2)$ is abelian, $\varphi$
factors through $\bar\varphi\co H_1(X)\to SO(2)$. 
Write $H_1(X)=\Z^k\oplus T$
where $T$ is the (finite) torsion subgroup, and deform the $\Z^k$ 
summand to the trivial subgroup inside $SO(2)$ without changing
$\bar\varphi_{|T}$.
(To do so choose a generating set $s_1,\dots s_k$ for $\Z^k$, 
fix arbitrary paths $p_i(t)$ in $SO(2)$ 
from $\bar\varphi(s_i)$ to $1$, and define $\bar\varphi_t(s_i)=p_i(t)$
and $\bar\varphi_{t|T}=\bar\varphi_{|T}$).
The endpoint is $\bar\varphi_1\co\pi_1(X)\to SO(2)$
whose image $\bar\varphi_1(T)$ is finite. Precomposing with
the abelianization $\pi_1(X)\to H_1(X)$ yields a path
in the representation variety $\mathrm{Hom}(\pi_1(X), SO(2))$
from $\varphi$ to a homomorphism with image $\bar\varphi (T)$.
By the covering homotopy theorem, the corresponding flat $SO(2)$-bundles
have isomorphic underlying $SO(2)$-bundles. Clearly,
if a bundle corresponds to a homomorphism $\pi_1(X)\to SO(2)$ with
finite image, then it becomes trivial in the cover that corresponds
to the kernel of the homomorphism.
\end{proof}

\begin{rmk}
A quicker but less elementary way to see that the bundle $\d N\to S$ 
is virtually trivial is to note that $\d N$ is a totally geodesic 
submanifold in the nonpositively curved
Heintze-Schroeder metric on $N$, so that the fundamental group
of each component of $\d N$ is $CAT(0)$, which implies that 
the centralizers virtually split~\cite[Theorem 1.1(iv), page 439]{BH}.
\end{rmk}

\section{Curvature computation}
\label{sec: curv comp}

The curvature computations done in this section are straightforward
and they could have been omitted had this paper been written
for differential geometers. Since we expect more diverse
readership, full details are given. Also instead of using ad hoc
arguments, we find it more illuminating to rely on general 
curvature formulas developed in~\cite{BW} and reviewed in 
Appendix~\ref{app: components-of-curv-tensor}.
This section could serve as a gentle introduction to
delicate curvature computations in~\cite{Bel-ch-warp},
which use Appendix~\ref{app: components-of-curv-tensor}
in essential way.

Given positive functions $v, h$, we write 
the Riemannian product metric of $v(r)\S1$ and $h(r)\rhm$
as $\l_{r,v, h}=v^2(r)d\theta^2+h^2(r){\bf h}_{n-2}$.
Consider the metric $\l_{v,h}=dr^2+\l_{r,v, h}$ on $I\times F$,
where $I$ is an open interval, and
$F$ is an underlying smooth manifold of ${\S1}\times \rhm$.
Thus if $v(r)=\sinh(r)$, $h(r)=\cosh(r)$
and $I\subseteq (0,\infty)$, then $\l_{v,h}$ coincides
with the real hyperbolic metric ${\bf h}_n$.
For brevity
we sometimes suppress $v,h$ and label tensors associated with
$\l_{v,h}$, $\l_{v,h,r}$ by $\l$, $\l_r$, respectively, and 
also denote by $K$ the sectional curvature of $\l$. 
%
%
%

Next we define a local orthonormal
frame on $I\times F$ in which the curvature of 
$\l_{v,h}$ will be computed.
Denote $\frac{\d}{\d r}$ by $\ddr$, and 
$\frac{\d}{\d\theta}$ by $X_1$.
Fix $z\in I\times F$ and let $w\in\rhm$ be the image
of $z$ under the projection to the last coordinate
$p\co I\times F\to\rhm$.
Consider an arbitrary orthonormal frame $\{\check X_i\}$, 
with $1<i<n$, defined on a neighborhood of $w$ in $\rhm$ such that 
$[\check X_i,\check X_j]$ vanishes at $w$ for all $i,j$.
(By a standard argument any orthonormal basis 
in $T_w\rhm$ can be extended to some $\{\check X_i\}$ as above). 
Let $X_i$ be the vector field on $I\times F$
obtained by lifting $\check X_i$ via the coordinate inclusions 
$\rhm\to I\times F$.
Then $\ddr, X_1,\dots , X_{n-1}$ is an orthogonal frame near $z$ such that
\begin{itemize}
\item [\rm(1)] $\langle X_1,X_1\rangle_\l=v^2$, 
and $\langle X_i, X_i\rangle_\l =h^2$ for $i>1$. 
\item [\rm(2)] $[X_i, X_j]=0=[X_i,\ddr]$ at $z$ for all $i,j$,
\end{itemize}
where all brackets vanish at $z$ as the manifold is smoothly a product.
The corresponding orthonormal frame $\ddr$, 
$Y_1=\frac{1}{v}X_1$, $Y_i=\frac{1}{h}X_i$, $i>1$ 
has the following properties:
 \begin{itemize}
\item[\rm(i)] $[Y_i, Y_j]=\frac{1}{h^2}[X_i, X_j]=0$ for $i,j>1$,
\item[\rm(ii)] $[Y_i, Y_1]=\frac{1}{hv}[X_i, X_1]=0$,
%
%
\end{itemize}
where the first equalities in (i), (ii) hold because 
any function of $r$ has zero derivative in the direction of $X_i$. 
%

Both $v(r)\S1$ and $h(r)\rhm$ have constant sectional curvature
so computing the curvature tensor of $R_{\l_r}$
is straightforward, e.g. it follows
from~\cite[Corollary V.2.3]{KN} and~\cite[Theorem 9.28]{Bes} 
that up to symmetries of the curvature tensor
the only nonzero components of $R_{\l_r}$ are the sectional
curvatures of $Y_iY_j$-planes with $1<i<j<n$,
which is $-h^{-2}$.

Combining this with the fact that all brackets $[Y_i, Y_j]$ vanish,
we see from Appendix~\ref{app: components-of-curv-tensor} that
up to symmetries of the curvature tensor
the only nonzero components of the curvature tensor $R_\l$ are
\begin{eqnarray}
&\label{form: rh, cood pl one}
\ \ \ \ \sec_\l(Y_i,Y_1)=-\frac{h^\prime v^\prime}{hv},
\ \ \ \ \  \sec_\l (Y_i,\ddr)=-\frac{h^{\prime\prime}}{h},
\ \ \ \ \  \sec_\l (Y_1,\ddr)=-\frac{v^{\prime\prime}}{v},\\
& \label{form: rh, cood pl two}
\sec_\l (Y_i,Y_j)=-\frac{1}{h^2}-\left(\frac{h^\prime}{h}\right)^2
\ \text{where}\ 1<i<j<n.
\smallskip
\end{eqnarray}
Hence the bivectors $Y_k\wedge Y_l$, $Y_i\wedge \ddr$ with $k<l$
diagonalize the curvature operator, and hence global maxima and minima
of $\sec_\l$ are attained on the coordinate planes $Y_kY_l$, 
$Y_i\ddr$, see e.g.~\cite[Chapter 3, Section 1]{Pet}.
Thus to achieve $\sec_\l<0$
we just need to choose $h$, $v$ increasing and satisfying
$h^{\prime\prime}>0$, $v^{\prime\prime}>0$. 

Let $M$, $S$ be as in Theorem~\ref{thm: intro-gr-theoretic-cor}.
We are going to modify the incomplete real hyperbolic 
metric on the ends of $M\setminus S$ near $S$.
Suppose $\e$ is less than the half of the
normal injectivity radius of $S$ in $M$, and let $I=(-\infty,\e)$. 
Let $\r\gg 1$ be a parameter. We let $h(r)=v(r)=e^r$ 
on $(-\infty, -\r]$, let 
$h(r)=\cosh(r)$, $v(r)=\sinh(r)$ on $[\e, \infty)$, and then
interpolate on $[-\r, \e]$ to ensure that $h$, $v$
are smooth and $h^\prime$, $v^\prime$,
$h^{\prime\prime}$, $v^{\prime\prime}$ are positive on $[-\r, \e]$, 
which is clearly possible for
sufficiently large $\r$ (e.g. choose $\r$ so that the tangent lines
to $e^r$ at $r=-\r$ and to $\cosh(r)$, $\sinh(r)$ at $r=\e$ 
intersect on $(-\r,\e)$). Consider the corresponding metric $\l=\l_{v,h}$. 
By (\ref{form: rh, cood pl one})--(\ref{form: rh, cood pl two})
one checks that $\sec_\l\le -1$ if $r\le-\r$, and
of course $\sec_\l=-1$ if $r\ge \e$. So compactness
of $[-\r, \e]$ implies that $\sec_\l$
is bounded above by a negative constant
depending only on $\r, \e$. The metric $\l$ descends to 
a metric on the ends of $M\setminus S$ near $S$;
the metric will be also denoted $\l=\l_{v,h}$.

\begin{rmk} 
Every end of $M\setminus S$ with metric $\l_{v,h}$
admits a Riemannian submersion
onto $(-\infty, 0]$, and we refer to its fibers
as {\it cross-sections}. In turn, every cross-section 
is the total space of a Riemannian submersion with fiber $v\S1$ 
and base $h B$, where $B$ is a component of $S$.
The volume of the cross-section can be computed by Fubini's theorem 
for Riemannian submersions~\cite[Corollary 5.7]{Sak},
and since $v\S1$ has volume $2\pi v$, the volume of the cross-section
is $2\pi v h^{2n-2}\vol(B)$. By the same result, the volume of
of the portion of the end corresponding to $[r, 0]$
is $2\pi\vol(B)\int_{r}^{0} v h^{2n-2}dr$ and the volume of the end
is obtained by taking $r$ to $-\infty$. It follows that the ends
have finite volume provided $h$ is bounded and 
$\int_{-\infty}^{r_0} v<\infty$, which holds for $\l_{v, h}$.
Thus if the ends of $M\setminus S$ that approach $S$
are given the metric $\l_{v,h}$, then $M\setminus S$ 
has finite volume.
\end{rmk}

\begin{rmk}\label{rmk: HSF metrics}
(1) By choosing $v, h$ equal to small constants on $(-\infty,\r]$ 
one recovers the Heintze-Schroeder's 
metric~\cite{Sch} of $\sec\le 0$ that is a product metric
at the ends that approach $S$. \newline
(2) By making $v(r)=e^r$
and $h(r)=e^r+\tau$ on $(-\infty,\r]$, 
for some small positive $\tau=\tau(\r, \e)$ 
one recovers Fujiwara's result~\cite{Fuj} that $M\setminus S$ admits
a complete finite volume metric of sectional curvature within $[-1,0)$.
\end{rmk}

\begin{rmk} 
\label{rmk: pinched curv in dim 2,3}
If $n\le 3$, then  $\sec_\l$ is bounded between
two negative constants because $Y_iY_j$ with 
$1<i<j<n$ is the only coordinate plane on which there is
no lower curvature bound, and $1<i<j<n$ cannot occur
when $n\le 3$.
\end{rmk}

\section{Proving relative hyperbolicity}
\label{sec: rel hyp}

We refer to Appendix~\ref{app: rel hyp} for basic information
on relatively hyperbolic groups. 
As is briefly explained in~\cite[Section 6]{Bow-rel},
Gromov's definition of relative hyperbolicity 
(stated somewhat informally in~\cite[Definition 8.6.A]{Gro-hgr})
is equivalent to Definition~\ref{defn: rel hyp}, which is
adopted in the present paper. 
Proposition~\ref{prop: gromov defn of rh} below is essentially
the assertion 
that Gromov's definition implies Definition~\ref{defn: rel hyp}
stated for group actions on negatively curved Hadamard manifolds.
Unfortunately, we could not find the precise statement we need
in the literature, so we supply a proof.

\begin{prop} \label{prop: gromov defn of rh}
Let $G$ be a group acting 
properly discontinuously and isometrically on a complete simply-connected 
Riemannian $n$-manifold $X$ with $\sec(X)\le -1$ and $n\ge 2$.
Suppose that $X$ contains a $G$-invariant family of horoballs
$\mathcal B$ such that \newline
\textup{(1)} for some $r>0$ the distance between any
two distinct horoballs in $\mathcal B$ is $\ge r$,\newline 
\textup{(2)}
$G$ acts cocompactly on the space $X_0$ obtained by removing from $X$ 
the interiors of the horoballs in $\mathcal B$.\newline
Then $G$ is a non-elementary relatively hyperbolic, 
relative to the set of stabilizers of the horoballs in $\mathcal B$.
The relatively hyperbolic boundary of $G$ is $\d X$,
which is homeomorphic to $(n-1)$-sphere.
\end{prop}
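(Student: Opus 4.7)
The plan is to match the data $(X,\mathcal B,G)$ directly with Definition~\ref{defn: rel hyp}, using that $X$ is CAT($-1$) — hence a proper Gromov hyperbolic geodesic space — and that the hypotheses (1) and (2) on $\mathcal B$ are precisely the horoball formulation of Bowditch's notion of geometrical finiteness on a variably negatively curved space. Since the paper notes that Gromov's definition of relative hyperbolicity is equivalent to Definition~\ref{defn: rel hyp}, the main content is to identify the peripheral subgroups with the horoball stabilizers and to compute the boundary.

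First I would check that for each $B\in\mathcal B$ the stabilizer $\mathrm{Stab}_G(B)$ fixes the unique basepoint $\xi_B\in\d X$ of $B$ and contains no loxodromic element — the latter because the axis of a loxodromic isometry would have to lie in $B$, which is impossible since both endpoints of such an axis are distinct points of $\d X$ while $B$ has only the single limit point $\xi_B$. Thus each $\mathrm{Stab}_G(B)$ is parabolic, and cocompactness of $G$ on $X_0$ together with proper discontinuity forces the family $\{\mathrm{Stab}_G(B)\}$ to exhaust the maximal parabolic subgroups of $G$ up to conjugacy. To show $\Lambda(G)=\d X$, fix $x_0\in X_0$ and for $\xi\in\d X$ take the unit-speed geodesic ray $\gamma$ from $x_0$ to $\xi$. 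Either $\gamma$ is eventually contained in some $B\in\mathcal B$, in which case $\xi=\xi_B$ is a bounded parabolic point fixed by $\mathrm{Stab}_G(B)$, or $\gamma$ spends arbitrarily large time in $X_0$ and the cocompact action there yields $g_n\in G$ with $g_n\cdot x_0\to\xi$, exhibiting $\xi$ as a conical limit point. In either case $\xi\in\Lambda(G)$, so the relatively hyperbolic boundary equals $\d X$. The homeomorphism $\d X\cong S^{n-1}$ comes from the radial identification of $\d X$ with the unit sphere in $T_{x_0}X$, using the Cartan--Hadamard diffeomorphism $\exp_{x_0}\co T_{x_0}X\to X$. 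Non-elementarity is then immediate since $S^{n-1}$ is infinite for $n\ge 2$.

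The main obstacle is justifying the dichotomy for geodesic rays used above: either the ray stays in $X_0$ for unbounded times, or it is asymptotically trapped inside a single horoball. The first half — that a ray cannot enter and exit infinitely many horoballs in a bounded total time outside — uses the pairwise separation $\ge r$ from hypothesis (1) and convexity of horoballs in the CAT($-1$) metric, so that each excursion into a horoball is preceded by a definite positive length in $X_0$. The second half — that only one horoball can asymptotically absorb $\gamma$ — follows from strict negative curvature, because a geodesic ray has a single endpoint in $\d X$ while distinct horoballs have distinct basepoints. With this dichotomy in hand, the proposition reduces to the (by now standard) equivalence, due to Bowditch, between geometrical finiteness of the action on $X$ and relative hyperbolicity of $G$ relative to its maximal parabolic subgroups.
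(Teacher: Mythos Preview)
Your approach is essentially the paper's: establish the conical/bounded-parabolic dichotomy for points of $\d X$ via the ray argument, identify the peripheral structure with the horoball stabilizers, and read off the sphere boundary from Cartan--Hadamard. Two steps you assert without justification are handled explicitly in the paper and deserve a sentence each.

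First, saying ``$\xi_B$ is a bounded parabolic point'' requires more than exhibiting a parabolic subgroup fixing it: you must show $\mathrm{Stab}_G(B)$ acts cocompactly on $\d X\setminus\{\xi_B\}$, equivalently on the horosphere $\d B$. This uses hypothesis~(2) together with the $r$-separation~(1): any $g\in G$ carrying a point of $\d B$ near $\d B$ must satisfy $g(B)=B$, so the cocompact $G$-action on $X_0$ restricts to a cocompact $\mathrm{Stab}_G(B)$-action on $\d B$. This also shows $\mathrm{Stab}_G(B)$ is infinite (since $\d B$ is noncompact), which your ``no loxodromic'' argument alone does not give --- without it $\mathrm{Stab}_G(B)$ could be finite and $\xi_B$ not parabolic at all.

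Second, the line ``cocompactness \dots\ forces $\{\mathrm{Stab}_G(B)\}$ to exhaust the maximal parabolic subgroups'' needs unpacking. The paper argues: a parabolic fixed point cannot be conical, hence lies in $\Pi=\{\xi_B:B\in\mathcal B\}$; and $\mathrm{Stab}_G(B)=\mathrm{Stab}_G(\xi_B)$ because if $g$ fixes $\xi_B$ then $g(B)$ and $B$ are concentric horoballs in $\mathcal B$, hence equal by~(1). These are short but not automatic.
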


\begin{proof}
Let $\Pi$ be the set of centers of horoballs in $\mathcal B$.
First, we show that any point of $\Pi$ is bounded parabolic.
In fact, if $z$ is the center of a horoball $B\in\mathcal B$, 
then the stabilizer $\mathrm{Stab}_G(B)$ of $B$ in $G$ is 
a parabolic subgroup fixing $z$ and acting cocompactly on the horosphere $\d B$.
(Indeed, $\mathrm{Stab}_G(B)$ fixes $z$,
and hence $\mathrm{Stab}_G(B)$ is an elementary subgroup
that acts cocompactly and properly discontinuously on 
the horosphere $\d B$, which is noncompact.
In particular, $\mathrm{Stab}_G(B)$ is infinite. Hence either  
$\mathrm{Stab}_G(B)$ is a parabolic subgroup, or else
$\mathrm{Stab}_G(B)$ stabilizes a geodesic in $X$ 
emanating from $z$;
the latter alternative is impossible because then 
$\mathrm{Stab}_G(B)$ would have to 
fix the point of $X$ where the geodesic 
intersects $\d B$.)

Second, any point of $y\in\d X\setminus\Pi$ is conical limit
point. Indeed, any ray emanating from $y$ intersects $X_0$ in a unbounded
set (else the ray would eventually stay in one horoball, so $y$
would be its center contradicting $y\notin\Pi$). 
Take $x_i\in X_0$ that lie on the ray and tend to
$y$. Since $X_0/G$ is compact, it has finite
diameter $D$, so there is an infinite $G$-orbit $g_i(x)$ with
$\mathrm{dist}(x_i, g_i(x))\le D$, hence $y$ is conical.

By Definition~\ref{defn: rel hyp}
it follows that $G$ is hyperbolic relative to its 
maximal parabolic subgroups, and the relatively hyperbolic boundary equals
to $\d X$, which is an $(n-1)$-sphere, in particular, 
$\d X$ contains more than two points, so $G$ is non-elementary.

Next observe that maximal parabolic subgroups of 
$G$ are precisely the subgroups $\mathrm{Stab}_G(z)$, $z\in\Pi$.
Indeed,
no conical limit point can be fixed by a parabolic 
subgroup~\cite[Proposition 3.2]{Bow-conv},
so if $z$ is the fixed point of a maximal parabolic subgroup, 
then $z\in\Pi$ and the maximal parabolic subgroup lies in
$\mathrm{Stab}_G(z)$.
Conversely, any point $z\in\Pi$ is fixed by the 
parabolic subgroup $\mathrm{Stab}_G(z)$,
which is contained in a maximal parabolic subgroup
whose unique fixed point must be $z$.

Finally, note that if the horoball $B\in\mathcal B$ 
is centered at $z$, then
$\mathrm{Stab}_G(B)$ equals to the stabilizer 
$\mathrm{Stab}_G(z)$ of $z$ in $G$: the inclusion
$\mathrm{Stab}_G(B)\subseteq\mathrm{Stab}_G(z)$ is obvious, and
if $g(z)=z$, then $g(B)$ and $B$ are concentric horoballs
in $\mathcal B$, hence $g(B)=B$ by the assumption (1).
\end{proof}

A neighborhood $E$ of an end in a complete Riemannian manifold
is called a {\it cusp neighborhood} if 
$E$ admits a Riemannian submersion onto $[0,\infty)$, and
there exists a constant $K$ such that the ``holonomy'' diffeomorphism
$h_t$ from the fiber over $\{0\}$ to the fiber over $\{t\}$ 
is $K$-Lipschitz for each $t$.

To make sense of the above definition 
recall that Riemannian submersions of complete manifolds
are smooth fiber bundles, so fibers of $E\to [0,\infty)$
are closed smooth submanifolds, and we denote the fiber
over $\{t\}$ by $F_t$. 
Through every point of $E$ there
exists a unique horizontal geodesics ray
that start at $F_0$; the ray intersects each $F_t$ orthogonally 
and projects isometrically onto $[0,\infty)$.
Pushing along horizontal geodesics ray defines
the diffeomorphism $h_t\co F_0\to F_t$.

\begin{thm} \label{thm: rel hyp}
For $n\ge 2$, $k\ge 1$, 
suppose that $V$ is a complete Riemannian $n$-manifold with
$k$ ends, which have disjoint 
cusp neighborhood $E_1,\dots, E_k$. 
Let $q\co \tilde V\to V$ denote the universal cover of $V$.
Then\newline
\textup{(1)} If $\sec(V)\le 0$,
then for each $i$, the inclusion $E_i\to V$ 
is $\pi_1$-injective, and every component
of $q^{-1}(E_i)$ is a horoball.\newline
\textup{(2)} If  $\sec(V)$ is bounded above by a negative constant, 
and each of the submersions $E_i\to [0,\infty)$ has compact fibers,
then $\pi_1(V)$ is non-elementary relatively hyperbolic, 
relative to the conjugates
of $\pi_1(E_1), \dots \pi_1(E_k)$ in $\pi_1(V)$, and the relatively 
hyperbolic boundary is the ideal boundary of $\tilde V$,
which is homeomorphic to $S^{n-1}$.
\end{thm}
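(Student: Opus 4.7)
My plan for Part (1) is as follows. Since $q\co\tilde V\to V$ is a local isometry, each connected component $\tilde E$ of $q^{-1}(E_i)$ inherits a Riemannian submersion $\tilde E\to[0,\infty)$ that lifts $E_i\to[0,\infty)$, and lifts of horizontal geodesic rays are geodesic rays of $\tilde V$. Writing $\tilde F_t=q^{-1}(F_t)\cap\tilde E$, the lifted holonomy $\tilde h_t\co\tilde F_0\to\tilde F_t$ is still $K$-Lipschitz, so for $p,q\in\tilde F_0$ the lifted rays $\tilde\gamma_p,\tilde\gamma_q$ satisfy
\[
d_{\tilde V}\bigl(\tilde\gamma_p(t),\tilde\gamma_q(t)\bigr)\ \le\ d_{\tilde F_t}\bigl(\tilde h_t(p),\tilde h_t(q)\bigr)\ \le\ K\,d_{\tilde F_0}(p,q)
\]
for all $t\ge 0$. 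In the Hadamard manifold $\tilde V$ any two geodesic rays at bounded distance are asymptotic, so every ray from $\tilde F_0$ converges to a common $z\in\partial_\infty\tilde V$.

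To finish Part (1) I would identify $\tilde E$ with a closed horoball centered at $z$. The connected closed hypersurface $\tilde F_0\subset\tilde V$ is orthogonal at each $p$ to the ray $\tilde\gamma_p$, and the horosphere $H_0$ at $z$ through $p$ is the unique hypersurface through $p$ with this property, so $\tilde F_0\subset H_0$. Now $H_0\approx\R^{n-1}$ is connected, and $\tilde F_0$ is both closed in $H_0$ (as a closed subset of $\tilde V$) and open in $H_0$ (as a boundaryless codimension-zero submanifold), forcing $\tilde F_0=H_0$. Since horizontal rays in a Riemannian submersion either coincide or are disjoint, $(p,t)\mapsto\tilde\gamma_p(t)$ is a bijection $\tilde F_0\times[0,\infty)\to\tilde E$, so $\tilde E$ equals the closed horoball bounded by $H_0$. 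Because this horoball is simply connected, $\tilde E\to E_i$ is the universal cover and $\pi_1(E_i)\hookrightarrow\pi_1(V)$.

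For Part (2) I would apply Proposition~\ref{prop: gromov defn of rh} to $G=\pi_1(V)$ acting on $X=\tilde V$, rescaling so that $\sec\le -1$. Take $\mathcal{B}$ to be the $G$-invariant family of all components of $q^{-1}(E_1)\cup\cdots\cup q^{-1}(E_k)$, which are horoballs by Part (1). Condition (2) of the proposition holds because $X_0=\tilde V\setminus\bigcup_{B\in\mathcal{B}}B^\circ$ is the $\pi_1(V)$-cover of $V\setminus\bigcup_i E_i^\circ$, and the latter is compact since $V$ has $k$ ends and each boundary fiber $F_0^{(i)}$ is compact. For condition (1), if distinct horoballs in $\mathcal{B}$ came arbitrarily close, projecting and using compactness of $F_0^{(1)}\cup\cdots\cup F_0^{(k)}$ would extract a pair of limit points in a common $F_0^{(i)}$ and lift back to produce infinitely many distinct deck transformations moving a fixed compact set inside a bounded neighborhood of itself, contradicting proper discontinuity. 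Proposition~\ref{prop: gromov defn of rh} then yields that $\pi_1(V)$ is non-elementary relatively hyperbolic with boundary $\partial_\infty\tilde V\cong S^{n-1}$, and the peripheral subgroups, being stabilizers of the horoballs, are precisely the conjugates of $\pi_1(E_1),\dots,\pi_1(E_k)$ (under the identification of each lift with the universal cover from Part (1)). The step I anticipate as most delicate is the equality $\tilde F_0=H_0$ in Part (1); once it is in place, Part (2) reduces to a direct verification of the hypotheses of Proposition~\ref{prop: gromov defn of rh}.
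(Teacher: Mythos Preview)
Your proposal is correct and follows essentially the same route as the paper: lift the cusp submersion, use the $K$-Lipschitz holonomy to show all horizontal rays are asymptotic, identify fibers with horospheres via orthogonality, and then feed the resulting horoball family into Proposition~\ref{prop: gromov defn of rh}. The one place to tighten is your ``uniqueness'' justification for $\tilde F_0\subset H_0$: as stated it is not quite a uniqueness statement, but what you need (and what the paper argues via coincidence of foliations) is simply that the Busemann function $b_z$ has vanishing differential along $\tilde F_0$ (since $\nabla b_z$ is tangent to the horizontal rays), hence is constant on the connected set $\tilde F_0$, giving $\tilde F_0\subset b_z^{-1}(c)=H_0$; your closed/open argument then finishes exactly as in the paper.
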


\begin{proof} 
Fix an end of $V$, and let $E\in\{E_1,\dots , E_k\}$ 
be the corresponding cusp neighborhood of the end.
Fix an arbitrary  component $C$ of $q^{-1}(E)$, 
so that $q$ restricts to a covering $C\to E$. 
We next show that $C$ is a horoball.
The Riemannian submersion $E\to [0,\infty)$ lifts to a Riemannian
submersion $C\to [0,\infty)$, whose fibers are denoted $\tilde F_t$.
The diffeomorphisms $h_t$ lifts to a diffeomorphism
$\tilde h_t\co\tilde F_0\to \tilde F$ defined by pushing along
horizontal geodesic rays of $C\to [0,\infty)$.
Since each $\tilde h_t$ is clearly $K$-Lipschitz,
any two horizontal geodesic rays of
the submersion $C\to [0,\infty)$ are asymptotic in $\tilde V$
(because the Hausdorff distance between the rays is bounded above
by $K$ times the distance between their initial points
in $\tilde F_0$).
So the horizontal geodesic rays define a point at infinity of $\tilde V$. 
The horospheres in $\tilde V$ centered at the point
are orthogonal to the horizontal rays in $C$ and hence 
by dimension reasons their tangent spaces coincide with 
tangent spaces of fibers of $C\to [0,\infty)$.
Since a foliation is determined by its tangent subbundle,
the foliations of $C$ by horospheres and by the fibers $F_t$ coincide.
As fibers and horospheres are codimension one closed submanifolds 
of $\tilde V$, every fiber is a closed and open subset of a horosphere,
so each $\tilde F_t$ equals to a horosphere.
Thus $C$ is the union of the horospheres that intersect a
fixed horizontal geodesic ray in $C$, i.e. $C$ is a horoball.
Since horoballs are contractible, the inclusion
$E\to V$ is $\pi_1$-injective, which proves (1).

Let $G:=\pi_1(V)$ and $X:=\tilde V$, and let
$\mathcal B$ be the set of horoballs in $X$ that
are components of $q^{-1}(\cup_i E_i)$.
To prove (2) we are going to check that the $G$-action on $X$
satisfies the assumptions of Proposition~\ref{prop: gromov defn of rh}.
Let $L:=V\setminus\cup_i E_i$, i.e. 
$L$ is the $G$-quotient of the space obtained from $X$
by removing all horoballs of $\mathcal B$.
By assumptions the closure of $L$ in $V$
is a compact manifold with boundary, and distinct 
horoballs are $r$-separated e.g. by setting $3r$ equal to
the normal injectivity radius of $\d L$.
By Proposition~\ref{prop: gromov defn of rh}
$G$ is a non-elementary relatively hyperbolic, 
relative to the set of stabilizers of the horoballs in $\mathcal B$.
Finally, the stabilizers of horoballs in $\mathcal B$
are precisely the conjugates of the subgroups
$\pi_1(E_1),\dots ,\pi_1(E_k)$, which completes the proof.
\end{proof}

\begin{proof}[Proof of \textup{(1)-(2)} 
in Theorem~\ref{thm: intro-gr-theoretic-cor}]
By Section~\ref{sec: curv comp}, we
equip $M\setminus S$ with a complete metric of sectional curvature
bounded above by a negative constant. Each end of $M\setminus S$
corresponding to a cusp of $M$ has a neighborhood with warped product metric
$dr^2+e^{2r}df^2$ where $df^2$ is a flat metric on a cusp cross-section.
Each end of $M\setminus S$ that approaches $S$ has a neighborhood
with metric
$dr^2+v^2 d\theta^2+h^2{\bf h}_{n-2}$.
In either case the $r$-coordinate projection is a Riemannian submersion
with compact fibers, and $h_t$ is $1$-Lipschitz as $e^r, v, h$ are 
increasing, so these are cusp neighborhoods, and Theorem~\ref{thm: rel hyp}
applies.
\end{proof}

\begin{rmk}
The conclusion that the relatively hyperbolic boundary
is a sphere is of interest in itself. Previously known
relatively hyperbolic groups with sphere boundaries are
the fundamental groups of complete
Riemannian orbifolds of finite volume and pinched negative curvature,
or closed aspherical piecewise hyperbolic locally $CAT(-1)$
manifolds whose links are topological spheres~\cite{DavJan}; 
these manifolds 
can be produced via the strict hyperbolization of polyhedra~\cite{ChaDav}.
\end{rmk}

\section{Proof of Theorem~\ref{thm: intro-gr-theoretic-cor}}
\label{sec: gr-theor-cor}

The parts (1)-(2) are proved in Section~\ref{sec: rel hyp}.

(3) immediately follows from (1) and a result of the 
author~\cite[Theorem 1.3]{Bel-HP},
where it was proved that if $N$ is a compact aspherical manifold
with incompressible boundary and $\pi_1(N)$ is hyperbolic relative to
fundamental groups of components of $\d N$ and $\dim(N)>2$, then $\pi_1(N)$
does not split over elementary subgroup of the relatively hyperbolic group
$\pi_1(N)$.

(4) A group is called {\it co-Hopf} if any
injective endomorphism is surjective. The co-Hopf property
for $\pi_1(N)$ follows from (1) and (3) together with a result of
Dru{\c{t}}u-Sapir~\cite{DruSap-rips} (see~\cite[Theorem 1.3]{Bel-HP}
for details). Alternatively, by~\cite[Theorem 1.5]{Bel-HP}, 
the co-Hopf property for $\pi_1(N)$ follows from (1)
together with results of Mineyev-Yaman~\cite{MinYam} who showed
that hyperbolicity of $\pi_1(N)$ relative to the
boundary implies positivity of the relative simplicial volume 
$||N, \d N||$.

(5) Given a class of groups $\mathcal C$,
a group $\Gamma$ is called {\it fully residually} $\mathcal C$
if any finite subset of $\Gamma$ can be mapped injectively
by a homomorphism of $\Gamma$ onto a group in $\mathcal C$.
Osin~\cite{Osi-periph} proved that if all peripheral subgroups
of a non-elementary relatively hyperbolic group $G$ are 
fully residually hyperbolic, then $G$ is fully residually 
non-elementary hyperbolic.
(This result can also be deduced from~\cite{GroMan}
provided $G$ is torsion-free and finitely generated, 
which is true for $G=\pi_1(N)$).
Now (5) follows from (1) together with 
the fact that the peripheral subgroups of $\pi_1(N)$ are 
fully residually finite, and hence fully residually hyperbolic
because finite groups are hyperbolic.

%

To see that peripheral subgroups of
$\pi_1(N)$ are fully residually finite recall that 
the components of $\d N$ corresponding to the cusps of $M$
have finitely generated virtually abelian fundamental groups, and
of course, finitely generated abelian groups are residually finite.
By Proposition~\ref{prop: flat bundle}
the fundamental group of any other component of $\d N$
contains a finite index subgroup isomorphic to $\Z\times\pi_1(B)$
where $B$ is a component of $S$. The groups $\Z$, $\pi_1(B)$ 
are linear and finitely generated, hence so is their product, 
which implies residual finiteness of $\Z\times\pi_1(B)$
by a classical result of Mal'tsev. 
Thus all components of $\d N$ have residually finite fundamental groups.
Finally, if the class $\mathcal C$ closed under
finite direct products, then it is immediate that any 
residually $\mathcal C$ group is fully residually $\mathcal C$,
and we conclude that $\pi_1(N)$ is fully residually finite.

(6) A group satisfies 
the {\it Strong Tits Alternative} if any subgroup either contains
a nonabelian free group or is virtually abelian.
Tukia~\cite{Tuk} (cf.~\cite[Secton 8.2.F]{Gro-hgr}) 
proved the following Tits Alternative for relatively 
hyperbolic groups: a subgroup that does not contain a non-abelian
free subgroup is either finite, or virtually-$\Z$, or lies in
a peripheral subgroup. Thus it suffices to prove
the strong Tits alternative for the peripheral subgroups, which
are extensions of non-elementary hyperbolic groups by $\Z$ 
(the fundamental group of a circle bundle over a closed 
hyperbolic manifold is such by the homotopy exact 
sequence of the bundle). 
Suppose $Q$ is hyperbolic and $p\co G\to Q$ is a quotient with 
infinite cyclic kernel. 
If $H\le G$ does not contain  $\Z \ast \Z$, then neither does
$p(H)$, so $p(H)$ is trivial or virtually-$\Z$, which immediately
implies that $H$ is virtually abelian of rank $\le 2$.

(7)
According to Rebecchi~\cite{Reb} 
a relatively hyperbolic group is biautomatic 
provided its peripheral subgroups are biautomatic. 
Virtually abelian groups are biautomatic~\cite{ECHLPT},
and so are virtually central extensions of hyperbolic groups~\cite{NeuRee},
so (1) implies that $G$ is biautomatic.

(8) Lemma~\ref{lem: morse}, proved in Appendix B, 
implies that the kernel $K$ of the surjection 
$\pi_1(M\setminus S)\to \pi_1(M)$ is a (countably generated) free group. 
If $H$ is a subgroup of $\pi_1(M\setminus S)$ with 
Kazhdan property (T), then it lies in the kernel
because any action of $K$ on a real hyperbolic space has a 
fixed point~\cite{HarVal}, while $\pi_1(M)$ acts freely. 
Thus the group $H$ is free, and hence trivial, as nontrivial
free groups do not have property (T).

\begin{rmk}
Similarly, one could show that $\pi_1(N)$ contains no nontrivial
simple subgroup for if $K$ is a simple subgroup of $\pi_1(N)$,
then either $H\subset K$ or $H\cap K=\{1\}$. In the former
case $H$ is free hence trivial, as nontrivial free groups are not simple. 
In the latter case $H$ embeds into $\pi_1(M)$, which is residually
finite, so that $H$ is residually finite and hence not simple.
\end{rmk}

(9) 
Arguing by contradiction, suppose that $K$ is a compact K\"ahler manifold 
with $\pi_1(K)$ isomorphic to $\pi_1(M\setminus S)$. 
Since $S$ has codimension two,
the inclusion $M\setminus S$
induces a surjection $\pi_1(M\setminus S)\to\pi_1(M)$. 
Precomposing with the isomorphism
$\pi_1(K)\cong \pi_1(M\setminus S)$ we get a surjection 
$\pi_1(K)\to\pi_1(M)$, which is homotopic to a non-constant
harmonic map $K\to M$~\cite{Lab}.
It was independently proved
in~\cite[Theorem 7.1]{CarTol} and~\cite[Theorem 3]{JosYau} 
that the harmonic
map $K\to M$ factors through a closed geodesic or a compact 
Riemann surface $\Sigma$. Since $\pi_1(M)$
is not virtually abelian, the map cannot factor through a geodesic, 
and $\Sigma$ must have negative Euler characteristic.

Fix a component $C$ of $S$, and let $B$ be the corresponding
component of $\d N$.
By Proposition~\ref{prop: flat bundle}
$B$ is a virtually trivial circle bundle over $C$.
So there exists a finite cover $\bar C\to C$
and a $\pi_1$-injective map $\bar C\to B$
such that  
$\bar C\to B$ followed by the bundle projection $B\to C$
is homotopic to $\bar C\to C$.
The inclusion $B\to N\subset M\setminus S$ is 
$\pi_1$-injective, and furthermore, the inclusion $B\to \d N\to M$ 
is homotopic to the  bundle projection $B\to C$ followed by
the inclusion $C\to M$.
Thus the covering $\bar C\to C\subset M$ homotopy factors through
the inclusion $B\to M$. It follows that 
$\pi_1(K)$ contains a subgroup isomorphic to 
$\pi_1(\bar C)$, and the above harmonic map
maps this subgroup isomorphically 
onto a finite index subgroup of $\pi_1(C)\le\pi_1(M)$. 
Since the harmonic map factors through $\Sigma$,
the group $\pi_1(\bar C)$
injects into $\pi_1(\Sigma)$.  
So $\pi_1(\bar C)$ has cohomological dimension $\le 2$. 
Since $\bar C$ is aspherical, $\bar C$ has dimension $\le 2$, 
and hence $\dim(S)\le 2$.

Nontrivial free groups are not K\"ahler~\cite[Example 1.19]{ABCKT}
so $\dim(S)>0$. 
If $S$ is $2$-dimensional, then the above $\pi_1$-injective
map $\bar C\to\Sigma$
is homotopic to a finite cover, so its image 
of $\pi_1(\Sigma)\to\pi_1(M)$ is commensurable to $\pi_1(C)\le\pi_1(M)$,
and in particular, $\pi_1(\Sigma)\to\pi_1(M)$ is not
onto, which is not the case.
If $S$ is one dimensional, then $M\setminus S$ is a real hyperbolic
$3$-manifold with finitely many closed geodesics removed.
Then it is a standard observation that 
$M\setminus S$ can be given negatively pinched metric 
e.g. by Remark~\ref{rmk: pinched curv in dim 2,3}, 
hence $N$ is irreducible and atoroidal, so 
by Thurston's hyperbolization theorem $M\setminus S$ admits 
a finite volume real hyperbolic metric.
If the group $\pi_1(M\setminus S)$ is K\"ahler, then we can repeat the
above argument for a harmonic map from $K$ to this real hyperbolic 
$3$-manifold,
that induces a $\pi_1$-isomorphism. We then conclude that 
the group $\pi_1(K)\cong\pi_1(M\setminus S)$ injects into $\pi_1(\Sigma)$. 
But $\pi_1(M\setminus S)$ contains 
$\mathbb Z\oplus\mathbb Z$ as a cusp subgroup, which contradicts
the fact that $\Sigma$ is has negative Euler characteristic.

(10) 
Osin proved in~\cite{Osi-asydim} 
that a relatively hyperbolic group has finite 
asymptotic dimension, provided all peripheral subgroups 
have finite asymptotic dimension.
It is known (see e.g.~\cite[Corollary 60]{BelDra})
that asymptotic dimension of finitely generated groups
is invariant under commensurability. 
Furthermore, the asymptotic dimension of 
an extension of finitely generated groups
is bounded above by the sum of the asymptotic dimensions 
of the kernel and the quotient~\cite[Theorem 7]{BelDra-ext},
Since $\Z$ has asymptotic dimension $1$,
any polycyclic finitely generated group
has finite asymptotic dimension, and in particular, this
applies to finitely generated nilpotent groups.
It remains to deal with extensions whose kernel is $\Z$, 
and the quotient is word-hyperbolic. 
For word-hyperbolic groups finiteness of 
asymptotic dimension is due to Gromov (see~\cite{Roe}). 

(11) Dru{\c{t}}u-Sapir~\cite{DruSap-RD} 
proved that a relatively hyperbolic group has rapid decay
property provided so do all peripheral subgroups.
Rapid decay property for groups of polynomial growth
(and in particular virtually abelian groups)
was established by Jolissaint~\cite[Proposition 2.1.9]{Jol}   
who also proved that the rapid decay property is invariant
under commensurability~\cite[Section 2.1]{Jol}.
So it remains to prove rapid decay property for central extensions
with kernel $\Z$ and hyperbolic quotient which was done
by Noskov~\cite{Nos}.

$\mathrm{(12)}$ 
Lafforgue~\cite[Corollary 0.0.4]{Laf-BC} 
proved Baum-Connes conjecture for groups with 
rapid decay property that are isomorphic to the fundamental 
groups of $A$-regular nonpositively curved manifolds.
A Riemannian metric is called {\it $A$-regular} if for every $k\ge 0$,
each component of the $k$th covariant derivative of the curvature tensor
is a bounded function on the manifold. 
By obvious compactness considerations,
any metric that is a product near infinity is $A$-regular, 
hence the Heintze-Schroeder's metric~\cite{Sch} on $M\setminus S$
is $A$-regular, so the result follows from (11).

\begin{rmk} Farrell-Jones showed that
the fundamental group of a 
complete $A$-regular manifold 
of $\sec\le 0$ satisfies the 
Borel's Conjecture~\cite[Proposition 0.10]{FJ-A-reg},
which says that the $L$ and $K$-theory assembly maps are isomorphisms.  
In particular, if $n\ge 5$, then any homotopy equivalence
of compact manifolds $L\to N$ that restricts to a homeomorphism 
$\d L\to\d N$ is homotopic rel boundary to a 
homeomorphism~\cite[Addendum 0.5]{FJ-A-reg}.
\end{rmk}

$\mathrm{(13)}$
In a discrete isometry group of a negatively pinched Hadamard manifold
the centralizer of any infinite order element is virtually nilpotent.
Indeed, an infinite order element is either hyperbolic or parabolic.
if the element is hyperbolic, then it
has exactly two fixed points at infinity which then must be stabilized
by the centralizer. Hence an index two subgroup of the centralizer 
fixes the two points, and hence acts properly discontinuously on the
geodesic joining the points. This shows that the centralizer is virtually 
cyclic. If the infinite order element is parabolic, then it fixes exactly
one point at infinity, and this point then must be fixed by the 
centralizer. According to Bowditch~\cite{Bow-par} the centralizer is
finitely generated and hence Margulis's Lemma shows it is also
virtually nilpotent. If $n>3$, then $\pi_1(N)$ has a peripheral subgroup
that is an extension with infinite cyclic kernel and non-elementary
hyperbolic quotient. Since $\mathrm{Aut}(\Z)$ has order two, 
its index two subgroup centralizes the infinite cyclic kernel,
yet the group is not virtually nilpotent, because it surjects
onto a non-elementary hyperbolic group. 

$\mathrm{(14)}$ 
Suppose that $\pi_1(N)$ embeds as a lattice 
$\L$ into a real Lie group $G$ with identity
component $G_0$, so Theorem~\ref{thm: rh lattices} applies.
Case (ii) is impossible, indeed $\L_0:=\L\cap G_0$ 
is isomorphic to a finite index subgroup of $\pi_1(N)$,
so $\L_0$ is torsion-free, hence it projects isomorphically
onto a lattice in $G_0/K$. So $\L_0$ embeds as a discrete subgroup
into the isometry group of a negatively curved symmetric space,
and we get a contradiction because 
by the proof of (13) $\L_0$ is not isomorphic to the 
fundamental group of a complete negatively pinched manifold.
Thus we are in Case (i), and the claim follows from the fact
that $\pi_1(N)$ is torsion free and the remarks made
after the statement of Theorem~\ref{thm: rh lattices},
which are justified in the proof of Theorem~\ref{thm: rh lattices}.

\begin{rmk} Of course,
if $\dim(M)\le 3$, then $M\setminus S$ carries
a complete finite volume real hyperbolic metric: 
the $2$-dimensional case is classical, and the $3$-dimensional case
follows from Thurston's hyperbolization theorem. Thus the
assumption $n>3$ is needed in (13), (14).
\end{rmk}

\section{Mostow-type rigidity for hyperplane complements}
\label{sec: mostow}

Theorem~\ref{thm: intro-mostow}
follows from Mostow-Prasad rigidity once it is shown that
any homotopy equivalence $M_1\setminus S_1\to M_2\setminus S_2$
takes ends of  $M_1\setminus S_1$ to ends of $M_2\setminus S_2$,
which is ensured by part (1)  of Theorem~\ref{thm: intro-gr-theoretic-cor}
together with an observation that
the conjugacy classes of peripheral subgroups of $\pi_1(N)$ 
are permuted by automorphisms of $\pi_1(N)$. Details are below.

\begin{proof}[Proof of Theorem~\ref{thm: intro-mostow}]
We first give a proof when $M_1$, $M_2$ are compact,
and later indicate modifications in the finite volume case. 
Let $N_i$ denote the complement in $M_i$ of an 
open $\e$-tubular neighborhood of $S_i$ where $\e$ is smaller than
the normal injectivity radius of $S_i$. 
Contracting along along the geodesic normal to $S_i$
identifies  $M_i\setminus S_i$ with the interior of $N_i$.

Throughout the proof we suppress 
the basepoints while talking about fundamental groups.
All fundamental groups are taken at some basepoints $p_i\in N_i$
with $f(p_1)=p_2$, 
and while talking about the fundamental group of a 
component of $\d N_i$, we implicitly fix an embedded 
path from $p_i$ to each component of $\d N_i$ and 
really are talking of the union of the component 
and the path. The same comment applies to components of $S_i$.

By Section~\ref{sec: rel hyp} the group
$\pi_1(N_i)$ is hyperbolic relative to the fundamental groups
of the components of $\d N_i$. A group is called 
{\it intrinsically elementary} if is not isomorphic to
a non-elementary subgroup of a relatively hyperbolic group.
As noted in~\cite[discussion after Theorem 1.5]{Bel-HP}),
examples of intrinsically elementary groups include
amenable groups and groups with infinite amenable normal subgroups.
Since fundamental groups of the components of $\d N_i$
are intrinsically elementary, and since $N_i$ is aspherical with
incompressible boundary, by a standard argument (spelled out e.g. 
in~\cite[Proof of Theorem 8.1]{Bel-HP}) $f$
induces a homotopy equivalence of pairs 
$(N_1,\d N_1)\to (N_2,\d N_2)$ whose pre/post-composition with
homeomorphisms $M_i\setminus S_i\to\mathrm{Int}(N_i)$ 
is homotopic to $f$. With a slight abuse of notations
we henceforth denote the homotopy equivalence of pairs by $f$. 
We can also assume that in each component of $\d N_1$
the map $f\co (N_1,\d N_1)\to (N_2,\d N_2)$ 
takes homeomorphically one fiber circle 
to a fiber circle in the corresponding components of $\ N_2$
(because fiber circles correspond to maximal 
infinite cyclic normal subgroup which are preserved by any group
isomorphism).

Let $N_i^d$ be the union of $N_i$ and a collection of $2$-discs,
one for each component of $\d N_i$, where each disc is attached along
the fiber circle in the component of $\d N_i$, and where we may assume
that these disks are fibers in disk bundles bounding the chosen circles.
Then $f$ extends to a map $f^d\co N_1^d\to N_2^d$ where 
$f^d$ is a homeomorphism on the corresponding discs. 
The inclusion $N_i\to M_i$ extends to an embedding
$N_i^d\to M_i$ which induces, by Van Kampen theorem, an isomorphism 
of fundamental groups $\pi_1(N_i^d)\cong\pi_1(M_i)$. Then
$f^d$ induces an isomorphism $\psi\co\pi_1(M_1)\to\pi_1(M_2)$, and
a homotopy equivalence $M_1\to M_2$ that extends $f$ up to
homotopy.
Since under the inclusions $N_i^d\to M_i$
the fundamental group of any component
of $\d N_i$ is mapped onto the corresponding component of $S_i$,
we conclude that $\psi$ takes the fundamental group of 
components of $S_1$ to the fundamental groups of corresponding 
components of $S_2$. Fix an arbitrary homotopy equivalence
$\check f\co S_1\to S_2$ induced by $\psi$; like any homotopy
equivalence of closed manifolds $\check f$ is onto.

Let $h\co M_1\to M_2$ be the unique isometry induced by $\psi$ via 
the Mostow rigidity. Suppressing inclusions, 
we think of $h$, $\check f$ as maps from $S_1$ to $M_2$.
These maps are homotopic because they are induced by $\psi$, 
so let $H_t\co S_1\to M_2$ denote the homotopy.
The images $h(S_1)$, $\check f(S_1)=S_2$ 
are totally geodesic embedded submanifolds of $M_2$;
we are to show that $h(S_1)=S_2$. 
Fix an arbitrary component 
$C$ of $S_1$, and lift $H_t\vert_C$ to a homotopy of universal
covers $\tilde H_t\vert_C\co \tilde C\to \tilde M_2$
joining two totally geodesic $(n-2)$-planes in $\tilde M_2=\rhn$.
By compactness of $C$
there is a uniform upper bound on the lengths of 
tracks of $\tilde H_t$, so these totally geodesic
$(n-2)$-planes in $\rhn$ are within
finite Hausdorff distance, hence they coincide, and 
therefore $h(C)=\check f(C)$. 
Applying this argument to each component of $S_1$ gives
$h(S_1)=\check f(S_1)=S_2$.

The uniqueness claim follows from uniqueness in Mostow rigidity.
Indeed, assume there are two isometries 
$h$, $h^\prime$ as above, and let 
$q:=h^\prime\circ h^{-1}$. Then $q$ is 
an isometry of $M_2$ with $q(S_2)=S_2$ such that 
the restriction of $q$ to $M_2\setminus S_2$ is homotopic to identity.
In a smooth manifold any two points can be mapped to each other by 
an isotopic to identity diffeomorphism, and the isotopy can be chosen
equal to identity outside a path joining the points.
Compose $q$ with such self-diffeomorphism $r$ of $M_2$
so that $r\circ q$ is homotopic to identity relative to a basepoint 
$p_2\in M_2\setminus S_2$, and $r\circ q=q$ away from a compact set in 
$M_2\setminus S_2$. 
Now take a loop $\g$ in $M_2$ based at $p_2$, and deform it away
from $S_2$, using that the inclusion $M_2\setminus S_2\to M_2$
is $\pi_1$-surjective. Then $r\circ q$, considered as
a self-map of $M_2\setminus S_2$ takes this loop to a 
homotopic loop based at $p_2$, so $r\circ q$ induces the identity on
of $\pi_1(M_2,p_2)$. Hence $q$ is homotopic to identity, in which
case $q$ equals to the identity by Mostow rigidity. 

In the finite volume case we obtain
$N_i$ by first chopping of the cusps of $M_i$
and then removing an $\e$-neighborhood of $S_i$, and the 
rest of the proof is the same.
\end{proof}

\begin{proof}[Proof of Corollary~\ref{cor: intro-out}] 
immediately follows from 
Theorem~\ref{thm: intro-mostow}, where the last assertion
of Corollary~\ref{cor: intro-out} holds as the isometry group 
of $M$ is finite by Mostow rigidity.
\end{proof}

\begin{rmk}
Finiteness of $\mathrm{Out}(\pi_1(M\setminus S))$ 
can be also deduced from part (3) of 
Theorem~\ref{thm: intro-gr-theoretic-cor}
together with results of Dru{\c{t}}u-Sapir~\cite{DruSap-rips} 
and the author~\cite{Bel-HP} 
(see~\cite[Theorem 1.3]{Bel-HP} for details).
\end{rmk}

\section{Lattices that are relatively hyperbolic}
\label{sec: lat rh}

Most of the ideas needed to prove 
Theorem~\ref{thm: rh lattices} are contained 
in~\cite{ACT, FarWei}. 
The proof uses structure theory of Lie groups, 
various basic facts on relatively
hyperbolic groups, Margulis's Normal Subgroup Theorem, 
Osin's result that any relatively hyperbolic group has 
an infinite normal subgroup of infinite index, 
and Dru{\c{t}}u's result~\cite{Dru-qi} 
that being relatively hyperbolic 
is a quasi-isometry invariant.
Note that Dru{\c{t}}u's result is only used to conclude that a group
that contains a finite index non-elementary relatively hyperbolic 
subgroup is non-elementary relatively hyperbolic. It is applied
in the ``only if'' direction of part (i) after we show that
the finite index subgroup $\L/\L_0$ of $G/G_0$ is relatively hyperbolic,
and in the ``if'' direction of part (ii) after we show that
the finite index subgroup $\L_0$ of $\L$ is relatively hyperbolic.
Thus by making the statement of Theorem~\ref{thm: rh lattices}
slightly less elegant one could avoid referring to~\cite{Dru-qi}.
All the other ingredients of the proof are indispensable.

\begin{rmk}
I expect that Theorem~\ref{thm: rh lattices}
holds for lattices in algebraic groups over local fields,
and encourage an interested reader to work this out. 
As a starting point, in all cases when the Normal Subgroup Theorem 
is known e.g. as in~\cite{Mar-book, BadSha},
the lattices cannot be relatively hyperbolic. 
On the other hand, if $k$ is a non-archimedean local field, then
all lattices in semisimple algebraic $k$-group
of $k$-rank {\it one} are relatively hyperbolic, namely,
by the proof of~\cite[Theorem 7.1]{Lub-rk1}
every lattice is the fundamental group of a finite graph
of group with finite edge groups, and any such group
is hyperbolic relative to the vertex subgroups, as
follows e.g. from~\cite[Definition 2]{Bow-rel}. 
(Nonuniform lattices are infinitely generated, but
most definitions of relative hyperbolicity extend to
infinitely generated case and lead to the same 
classes of groups~\cite{Hru}.)
My interest in describing lattices that are relatively
hyperbolic was motivated by the work
of Behrstock-Dru{\c{t}}u-Mosher,
who by a very different method proved 
in~\cite[Section 13]{BDM} that higher rank 
lattices in semisimple groups over local fields of characteristic zero
are never relatively hyperbolic. 
\end{rmk}

\begin{proof}[Proof of Theorem~\ref{thm: rh lattices}]
It is straightforward to check that
$\L G_0$ is a closed subgroup of $G$, so 
by~\cite[Theorem 1.13]{Rag-book} the group $\L_0:=\L\cap G_0$ is a lattice
in $G_0$. Furthermore, $\L/\L_0$ is a lattice in 
the (discrete) group $G/G_0$
because it leaves invariant
the pushforward of the $G$-invariant measure on $G/\L$
by the projection $G/\L\to (G/G_0)/(\L/\L_0)$; in other words
$\L/\L_0$ is a finite index subgroup of $G/G_0$. 

{\bf Case (i).}
If $G_0$ is compact, then $\L_0$ is finite.
Therefore, 
$\L$ is non-elementary relatively hyperbolic if and only if
$\L/\L_0$ is non-elementary relatively hyperbolic if
and only if $G/G_0$ is non-elementary relatively hyperbolic
(see Appendix~\ref{app: rel hyp}). That relative hyperbolicity of 
$\L/\L_0$ implies relative hyperbolicity of $G/G_0$
depends on delicate work of Dru{\c{t}}u~\cite{Dru-qi}. 

Thus it remains to consider the case when 
$G_0$ is noncompact, in which case we are to prove that
$\Lambda$ is isomorphic to a relatively hyperbolic group
if and only if (ii) holds. First, we treat the ``only if''
part.

{\bf Passing to semisimple quotient.}
Since $G_0$ is noncompact, $\L_0$ is infinite.
Let $R$ be the solvable radical of $G_0$, i.e.
a unique maximal solvable connected normal 
subgroup of $G_0$. Note that $R$ is normal in $G$.
Let $p\co{G}\to {G}/R$ be the projection.
Since $\L$ contains no 
infinite solvable normal subgroup, $R\cap\Lambda$
is finite, and $\L$
is mapped with finite kernel onto a subgroup $p(\L)$ of 
the Lie group ${G}/R$ whose identity component
${G_0}/R$ is semisimple.

If $p(\L)$ is not discrete in ${G}/R$, then by a result of 
Auslander~\cite[Theorem 8.24]{Rag-book} the identity component 
$U$ of the closure of $p(\L)$ is solvable.
Then $p^{-1}(U)$ is a solvable group that has infinite
intersection with $\L$, and it is easy to check that 
the subgroup $\L\cap p^{-1}(U)$ is normal in $\L$,
so this case cannot occur, as $\L$ has no nontrivial solvable 
normal subgroups and $p^{-1}(U)$ is solvable.

Thus $p(\L)$ discrete. Then
it is straightforward to check that
$R\L$ is a closed subgroup of $G$, so 
by~\cite[Theorem 1.13]{Rag-book} the trivial group $R\cap\L$ is a lattice
in $R$, so $R$ is compact.
Recall that a Lie group homomorphism 
with compact kernel takes lattices to lattices,
as can be deduced e.g. from~\cite[Lemma 1.6]{Rag-book},
so $p(\L)$ is a lattice in $G/R$, whose identity component
${G_0}/R$ is semisimple.
 
{\bf Passing to centerless semisimple quotient with no compact factors.}
Let $C$ be a unique maximal connected compact 
normal subgroup of $G_0/R$ so that $H_0:=(G_0/R)/C$ is semisimple with
no compact factors. By uniqueness, $C$ is normal in $G/R$;
we let $H:=(G/R)/C$ and note that $H_0$ is the identity
component of $H$ as $R, C$ are connected.
Since $R$ and $C$ are compact, the kernel of the projection 
$h\co G\to H$ is compact, so the kernel of $\L\to h(\L )$ is finite,
$h(\L)$ is a lattice in $H$, and
$h(\L_0)$ is a lattice in $H_0$. In particular, 
$h(L)$ is still isomorphic to a non-elementary relatively
hyperbolic group.

The center $Z$ of $H_0$ is normal in $H$.
By~\cite[Lemma IX.6.1]{Mar-book}, $Z\cap q(\L_0)$
has finite index in $Z$. On the other hand,
$Z\cap h(\L)$ is an abelian normal subgroup of $h(\L)$,
so $Z\cap h(\L)$ is finite. Thus $Z$ is finite.
We let $Q:=H/Z$ and $Q_0:=H_0/Z$, so that $Q_0$ is
the identity component of $Q$, and $Q_0$ is noncompact
semisimple with trivial center and no compact factors.
Let $K$ be the (compact) kernel of the projection 
$q\co G\to Q$; thus 
the kernel of $\L\to q(\L )$ is finite (and lies in $G_0$ and hence in
$\L_0$),
$\G:=q(\L)$ is a lattice in $Q$, and
$\G_0:=q(\L_0)$ is a lattice in $Q_0$, while 
$\G$ is still isomorphic to a non-elementary relatively
hyperbolic group.
 
{\bf Proving that $G/G_0$ is finite.} 
Since the kernel of $q\co G\to Q$ lies in $G_0$,
it induces an isomorphism $G/G_0\to Q/Q_0$
which takes $\L/\L_0$ onto $\G/\G_0$.
As mentioned above, $\L/\L_0$ is a lattice in  $G/G_0$,
so $\G/\G_0$ is a lattice in $Q/Q_0$, which is discrete,
and hence it suffices to show that $\G/\G_0$ is finite.

Arguing by contradiction suppose that $\G/\G_0$ is infinite.
The center of $\G_0$ is finite, else $\G$ would have an abelian
infinite normal subgroup. 
In fact, it follows from 
Borel Density theorem that any finite normal subgroup
of a lattice in a semisimple Lie group with finite center
and no compact factors lies in the center of the Lie group
(see e.g.~\cite[Corollary 5.42]{Wit-book}). Thus
$\G_0$ is centerless. 
Hence the extension
\[
1\to\G_0\to\G\to \G/\G_0\to 1
\]
is completely determined by the canonical homomorphism
$\G/\G_0\to\mathrm{Out}(\G_0)$, see~\cite[IV, Corollary 6.8]{Bro}.
The homomorphism comes from the $\G$-action on $Q_0$
by conjugation. Since $Q_0$ is semisimple and centerless, 
$\mathrm{Out}(Q_0)$ is finite~\cite[Theorem IX.5.4]{Heg}. 
Thus after passing to a finite index
subgroup, we can assume that $\G$ acts on $Q_0$ by conjugation 
via inner automorphisms of $Q_0$.  
Since the actions preserves $\G_0$, the homomorphism
$\G/\G_0\to\mathrm{Out}(\G_0)$ factors through $N_{Q_0}(\G_0)/\G_0$,
where $N_{Q_0}(\G_0)$ denotes the normalizer of $\G_0$ in $Q_0$.
By the Borel Density Theorem, $N_{Q_0}(\G_0)/\G_0$ is finite
(see e.g.~\cite[Corollary 5.43]{Wit-book}). Hence
after passing to a finite index subgroup 
we can assume that the homomorphism
$\G/\G_0\to\mathrm{Out}(\G_0)$ is trivial, hence
a finite index subgroup of $\G$ is the product of two groups,
commensurable to $\G_0$ and $\G/\G_0$, which are infinite.
This is impossible for a non-elementary relatively
hyperbolic group, which gives a promised contradiction.

{\bf $G_0/K$ has real rank one.}
Note that $\G_0$ is relatively hyperbolic, as a finite index
subgroup of a relatively hyperbolic group $\G$. 
So $\G_0$ is not virtually a product of infinite groups, 
hence $\G_0$ is an irreducible lattice in $Q_0$.
By Proposition~\ref{prop: norm subg rh}, which is
due to Osin, $\G_0$ has an infinite normal subgroup of infinite index,
which is impossible for higher rank lattices by
Margulis's Normal Subgroup Theorem~\cite[Theorem IX.6.14]{Mar-book}.
Thus $Q_0=G_0/K$ has $\mathbb R$-rank one.

{\bf If (ii) holds, then $\L$ is non-elementary relatively hyperbolic.}
The projection $G_0\to G_0/K$ restricted to $\L_0$ has finite kernel
and the image is the lattice $\G_0$ in the isometry group of a hyperbolic space
over reals, quaternions, complex or Cayley numbers.
It is well-known that $\G_0$ is (non-elementary) relatively hyperbolic, 
and hence so is $\L_0$. By~\cite{Dru-qi} $\L$ is (non-elementary)
relatively hyperbolic.
\end{proof}

\appendix

\section{On relatively hyperbolic groups}
\label{app: rel hyp}

Relatively hyperbolic groups were introduced by 
Gromov~\cite{Gro-hgr}, and in this paper we use the 
following version of Gromov's definition developed 
by Bowditch (see~\cite[Definition 1]{Bow-rel}).

Let $G$ be a finitely generated group 
with a (possibly empty) 
family $\mathcal G$ of infinite finitely generated  subgroups.
Suppose that $G$ acts properly discontinuously and isometrically 
on a proper, geodesic, hyperbolic metric space $X$. 
Then $G$ acts on the ideal boundary $\d X$
of $X$ as a convergence 
group~\cite[Proposition 1.12]{Bow-conv}
with limit set $L(G)\subseteq\d X$. 
We refer to~\cite[Section 6]{Bow-rel}, or~\cite{Bow-conv}, 
or~\cite[Section 5]{Yam-rel} or ~\cite{Fre}
for relevant background on 
convergence groups.

\begin{defn}\label{defn: rel hyp}\rm
We say that $G$ is {\it hyperbolic relative to $\mathcal G$} 
if $L(G)=\d X$, each point of $L(G)$ is either conical or bounded
parabolic, and the maximal parabolic subgroups of $G$
are precisely the elements of $\mathcal G$.
\end{defn}
Elements of $\mathcal G$ are also called {\it peripheral subgroups}.
It is known that there only finitely many conjugacy classes of 
peripheral subgroups.

Other definitions of relatively hyperbolic groups 
were developed by Farb~\cite{Far-rel}, 
Bowditch \cite[Definition 2]{Bow-rel}, Yaman~\cite{Yam-rel}, 
Dru{\c{t}}u-Osin-Sapir~\cite{DOS}, Osin \cite{Osi-rel},
Dru{\c{t}}u~\cite{Dru-qi}, and Mineyev-Yaman~\cite{MinYam}. 
It is known that all these definitions are 
equivalent to Definition~\ref{defn: rel hyp},
provided $G$ and all its peripheral subgroups are finitely 
generated and infinite. 
The proofs of various equivalences can be
found in~\cite[Appendix A]{Dah-rel} (cf.~\cite{Bow-rel, Szc, Bum-rel}),
\cite[Theorem 7.10]{Osi-rel}, \cite{Yam-rel}, \cite[Theorem 8.5]{DOS},
\cite[Theorems 4.21,4.34]{Dru-qi}, \cite[Theorem 57]{MinYam}.
With many different approaches to relative hyperbolicity
there are some minor differences in terminology, which
will be pointed out as needed.
 
Bowditch in~\cite{Bow-rel} developed a notion of a relatively
hyperbolic boundary for $G$ that 
is $G$-equivariantly homeomorphic to $\d X$, and such that
any isomorphism of relatively hyperbolic groups that preserves
the collection of peripheral subgroups induces an equivariant 
homeomorphism of the boundaries.

We summarize below some basic properties of 
(discrete)  convergence groups.
A subgroup of a convergence  group is called
{\it elementary} if its limit set 
contains at most two points,
which happens exactly if the subgroup is finite, virtually-$\Z$,
or parabolic. Otherwise, the subgroup is called
{\it non-elementary}. An elementary subgroup containing
a parabolic element cannot contain a hyperbolic element. 
Suppose $H$ is a non-elementary convergence group.
By a ping-pong argument a non-elementary subgroup of $H$ contains 
a nonabelian free group~\cite{Tuk}. 
Any finite normal subgroup of $H$ acts trivially on the limit set,
and the limit set of an infinite normal subgroup of $H$ 
equals to the limit set of $H$ (see~\cite{Fre}). 
In particular, any infinite normal
subgroup of $H$ is non-elementary, and $H$ has no infinite
virtually solvable normal subgroups. 
It follows that $H$
is not virtually a product of infinite groups,
for if $H_1\times H_2$ is a finite index subgroup of $H$
and $H_1, H_2$ are infinite, then 
$H_1\times H_2$, and hence  $H_1, H_2$ are non-elementary,
but the normalizer of any infinite order element of $H_1$ contains $H_2$,
and elementary subgroups have elementary normalizers.

Relative hyperbolicity of $G$ is clearly inherited by extensions 
$\hat G$ of $G$ such that
$\hat G\to G$ has finite kernel,
and subgroups $\check G$ of $G$ of finite index. 
Indeed, one can use the actions of $\hat G$, $\check G$
on the same hyperbolic space induced by the $G$-action, 
and define the peripheral subgroups of $\hat G$, $\check G$
to be stabilizers of the parabolic fixed point of $G$.
Also if $N$ is a finite normal subgroup of $G$, 
then $G/N$ is hyperbolic relative 
to the images of the peripheral subgroups of $G$, because
relative hyperbolicity is encoded in the action on the boundary, 
and the $G$-action on the boundary factors through the projection
$G\to G/N$ (an algebraic proof of this fact
can be found in~\cite[Lemma 4.4]{AMO}).
It follows that the kernel of the action of a relatively hyperbolic group
on its boundary is a unique maximal normal finite subgroup,
which was characterized in purely group theoretic terms 
in~\cite[Lemma 3.3]{AMO}. 
Finally, if $G$ is non-elementary,
then $\hat G$, $\check G$, $G/N$ are non-elementary
because all these groups share the same boundary.

By contrast, is not at all easy
to show that relative hyperbolicity is inherited by
any group $\tilde G$ that contains $G$ as a finite index subgroup.
Dru{\c{t}}u recently proved~\cite{Dru-qi} a much more
general result that the property of being non-elementary relatively 
hyperbolic is invariant under quasi-isometry. 

The following result does not appear in the literature.

\begin{prop}\textup{\bf (Osin)}\label{prop: norm subg rh}
Any non-elementary relatively hyperbolic group has an infinite
normal subgroup of infinite index.
\end{prop}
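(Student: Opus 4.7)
The plan is to exhibit a loxodromic element $g\in G$ and to take the normal closure $N_n:=\langle\langle g^n\rangle\rangle$ for large $n$ as the required subgroup: $N_n$ is automatically infinite because $g^n$ has infinite order, and the real content is to show that $G/N_n$ is also infinite.

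First, I would use the convergence action of $G$ on $\d X$ to produce a loxodromic element $g\in G$ whose maximal elementary subgroup $E(g)$ is virtually infinite cyclic. Since $G$ is non-elementary, the limit set $L(G)=\d X$ is perfect, and the set of parabolic fixed points is a countable union of $G$-orbits of bounded parabolic points. Tukia's classification of convergence groups then guarantees the existence of loxodromic elements, and a ping-pong argument lets one choose the fixed pair $\{g^+,g^-\}$ to avoid all parabolic points. For such a $g$ the elementary subgroup $E(g)=\mathrm{Stab}_G(\{g^+,g^-\})$ contains no parabolic elements, acts properly on a quasi-axis of $g$, and is therefore virtually cyclic, with $g$ contained in a finite-index cyclic subgroup of $E(g)$.

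The decisive step is to invoke Osin's small cancellation theorem over relatively hyperbolic groups applied to the subgroup $\langle g\rangle$: for all sufficiently large $n$, the quotient $G/\langle\langle g^n\rangle\rangle$ is itself a non-elementary relatively hyperbolic group, with peripheral structure inherited from $G$ via the projection. This is the relative analogue of Olshanskii's classical theorem for word-hyperbolic groups, and in the relatively hyperbolic setting it can be obtained either from small cancellation in the sense of Osin or from the group-theoretic Dehn filling/rotating families machinery of Groves-Manning and Dahmani-Guirardel-Osin. Once the quotient is non-elementary, it is in particular infinite, so $N_n$ has infinite index in $G$; combined with $g^n\in N_n$, this produces the desired infinite normal subgroup of infinite index.

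I expect the small cancellation step to be the main obstacle. The existence of a loxodromic element with virtually cyclic elementary closure is a routine consequence of the convergence-group machinery on $\d X$, and the infiniteness of $N_n$ is immediate. However, showing that killing a single very high power of a generic loxodromic element leaves a large (let alone still relatively hyperbolic) quotient requires serious geometric input: one must propagate the hyperbolicity of $X$ through the quotient and verify that parabolic points remain bounded parabolic. Once this black box is available, the remainder of the argument is essentially formal.
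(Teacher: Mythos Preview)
Your plan is correct and follows the same route as the paper: produce a loxodromic element, take the normal closure of a high power, and invoke relatively hyperbolic Dehn filling to see that the quotient remains large. The paper's execution differs in two technical respects worth noting.

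First, the paper works with Osin's Dehn surgery theorem~\cite{Osi-periph} in its original form, which fills a normal subgroup of a \emph{peripheral} subgroup. To fit that framework the paper first invokes~\cite{Osi-elem} to enlarge the peripheral structure by adjoining $E(g)$, then repeats the construction to obtain a \emph{second} hyperbolic element $h$ (for the enlarged structure) and adjoins $E(h)$ as well. Only then does it fill $\langle h^n\rangle\trianglelefteq E(h)$. Second, the payoff of carrying along the extra peripheral subgroup $E(g)$ is that the Dehn surgery theorem guarantees $E(g)$ injects into the quotient, so the quotient is visibly infinite without having to argue it is non-elementary. Your single-element version, concluding that $G/\langle\langle g^n\rangle\rangle$ is itself non-elementary, is also valid---the paper even remarks this can be done---but if you cite~\cite{Osi-periph} or~\cite{GroMan} directly you will still need the preliminary step of making $E(g)$ peripheral; the later Dahmani--Guirardel--Osin machinery you mention absorbs that step.

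One minor point: the care you take to choose $g$ with $\{g^+,g^-\}$ disjoint from the parabolic points is unnecessary. In a geometrically finite convergence group the fixed points of any loxodromic element are conical, hence never bounded parabolic, and $E(g)$ is virtually cyclic for every loxodromic $g$.
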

\begin{proof} The proof below applies to all groups that
are relatively hyperbolic in the sense of Osin, and there are
various differences in terminology, e.g. 
Osin reserves the term ``elementary'' 
for virtually cyclic subgroups, and calls an  element
``hyperbolic'' if it cannot be conjugated into a peripheral subgroup;
an element of infinite order is hyperbolic in Osin's sense
if and only if it fixes exactly two points on the boundary.
In what follows we rephrase Osin's results in the terminology
adopted in this paper.

Take a non-elementary relatively hyperbolic group $G$ with peripheral
subgroups $H_\l$. 
By~\cite[Corollary 4.5]{Osi-elem} $G$ contains an (infinite order)
hyperbolic element. 
By~\cite[Theorem 4.3]{Osi-elem} every (infinite order)
hyperbolic element $g$ is contained
in a unique maximal virtually cyclic subgroup $E(g)$. 
By~\cite[Corollary 1.7]{Osi-elem} $G$ is hyperbolic relative 
to $\{H_\l, E(g)\}$ which is non-elementary,
since the original relatively hyperbolic structure on 
$G$ was non-elementary. Repeating this argument
we find another (infinite order) 
hyperbolic element $h$ such that $G$ is hyperbolic
to  $\{H_\l, E(g), E(h)\}$. By~\cite[Theorem 1.1]{Osi-periph}
for all large $n$ the quotient $\bar G$ of $G$ by the relation $h^n=1$
is hyperbolic relative to the images of $\{H_\l, E(g), E(h)\}$
and the projection map $G\to\bar G$ is one-to-one on each peripheral
subgroups other than $E(h)$. In particular, the group $\bar G$
is infinite, as it contains the infinite virtually cyclic subgroup $E(g)$
(and in fact one can easily show that $\bar G$ is non-elementary,
as at the end of the proof of~\cite[Corollary 1.6]{Osi-periph}).
The kernel is also infinite because it contains the infinite
cyclic subgroup generated by $h^n$, thus the kernel
is an infinite normal subgroup of infinite index.
\end{proof}

\section{Morse theoretic lemma}
\label{app: morse}

Particular cases of the following lemma were used by Toledo~\cite{Tol-nrf}
and also by those working with $3$-hyperbolic manifolds (as explained
to me by Ian Agol).

Suppose that $M$ is a connected complete $n$-manifold of $\sec\le 0$, 
and $S$ is a compact (not necessarily connected) totally geodesic 
submanifold of $M$ of codimension $>1$. 
Denote the universal cover of $M$ by $\tilde M$, and let
$\tilde S$ be the preimage of $S$ under $\tilde M\to M$.
Let $N$ be the complement in $M$ of an 
open tubular neighborhood of $S$.

\begin{lem} \label{lem: morse}
\textup{(i)} $\tilde M\setminus\tilde S$ is diffeomorphic to the manifold
obtained from an open $n$-ball by attaching $k$-handles, one for each
component of $\tilde S$ of codimension $k+1$. \newline
\textup{(ii)} 
$M\setminus S$ is aspherical if and only if each 
component of $S$ has codimension two, in which case
the inclusion $M\setminus S\to M$ induces a surjection
$\pi_1(M\setminus S)\to \pi_1(M)$ whose kernel 
is a free group on the set of components of $\tilde S$.\newline
\textup{(iii)}
no homotopically nontrivial loop in $\d N$ is null-homotopic in $N$.
\end{lem}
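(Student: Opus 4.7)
The plan is to prove the three parts in order, with (i) requiring the most work and the latter parts built on top of it.

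For (i), I would apply Cartan--Hadamard: $\tilde M$ is diffeomorphic to $\mathbb{R}^n$, and the squared-distance function $f(x)=d(x,p)^2$ from a generic basepoint $p\in\tilde M\setminus\tilde S$ is smooth, strictly convex, and proper on $\tilde M$ with unique minimum at $p$. Because each component $\tilde S_\alpha$ is itself a complete totally geodesic (hence Hadamard) submanifold, $f|_{\tilde S_\alpha}$ is strictly convex with a unique minimum at the foot of perpendicular $q_\alpha$; a generic choice of $p$ makes the numbers $r_\alpha:=d(p,q_\alpha)$ pairwise distinct, and since $\tilde S=q^{-1}(S)$ is invariant under the properly discontinuous deck action and $S$ is compact, the set $\{q_\alpha\}$ is locally finite in $\tilde M$ with $r_\alpha\to\infty$. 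I would exhaust $\tilde M\setminus\tilde S$ by the compact sets $\overline{B(p,\sqrt t)}\setminus\nu_\epsilon(\tilde S)$ (for small $\epsilon>0$) and analyze how their diffeomorphism type changes as $t$ crosses each $r_\alpha^2$. The local model near $q_\alpha$ is the operation of removing a properly embedded $k_\alpha$-disk (a slice of $\tilde S_\alpha$, where $k_\alpha+1$ equals the codimension) from a closed $n$-ball, which is a standard differential-topology computation showing that the change is diffeomorphic to attaching a $k_\alpha$-handle with a trivially embedded attaching sphere. Since there is exactly one crossing per component of $\tilde S$, iterating and passing to the open interior yields the asserted handle decomposition.

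For (ii), part (i) shows $\tilde M\setminus\tilde S$ is homotopy equivalent to a wedge $\bigvee_\alpha S^{k_\alpha}$ indexed by the components of $\tilde S$. If every component of $S$ has codimension two, all $k_\alpha=1$ and the wedge is a graph, i.e.\ a $K(F,1)$ with $F$ free on the index set. Since $\tilde S$ is $\pi_1(M)$-invariant, the universal cover $q:\tilde M\to M$ restricts to a regular cover $\tilde M\setminus\tilde S\to M\setminus S$ with deck group $\pi_1(M)$, yielding the short exact sequence
\[
1 \to \pi_1(\tilde M\setminus\tilde S) \to \pi_1(M\setminus S) \to \pi_1(M) \to 1,
\]
so $\pi_1(M\setminus S)\to\pi_1(M)$ is surjective with kernel identified as $F$. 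Asphericity of $\tilde M\setminus\tilde S$ (which is $K(F,1)$) then forces asphericity of $M\setminus S$ since higher homotopy groups are preserved by covers. Conversely, if some component of $S$ has codimension $\ge 3$, then some $k_\alpha\ge 2$, so $\pi_{k_\alpha}(\tilde M\setminus\tilde S)\ne 0$ and $M\setminus S$ is not aspherical.

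For (iii), $M\setminus S$ deformation retracts onto $N$ via the collar in the punctured normal bundle of $S$, so $\pi_1(N)=\pi_1(M\setminus S)$ sits in the extension from (ii). For each component $C$ of $S$, the corresponding boundary component $\partial N_C$ is a circle bundle over $C$ with the standard sequence $1\to\mathbb{Z}\to\pi_1(\partial N_C)\to\pi_1(C)\to 1$, and I would apply the five-lemma to the map from this sequence into the extension of (ii) induced by $\partial N_C\hookrightarrow N$. The base map $\pi_1(C)\to\pi_1(M)$ is injective since $C$ is totally geodesic in the nonpositively curved $M$. On kernels, the fiber $\mathbb{Z}$ is sent into $F$ by a loop that lifts to a small linking loop around a fixed component $\tilde C\subset q^{-1}(C)$; by the wedge-of-circles model from (i) this loop is precisely the free generator of $F$ associated with $\tilde C$, hence has infinite order, so the kernel map is injective and the five-lemma delivers $\pi_1$-injectivity of $\partial N_C\hookrightarrow N$. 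The main obstacle is making the Morse-theoretic argument in (i) rigorous in the open, infinite setting, where compact Morse theory does not apply verbatim; the local handle analysis is routine, but the global bookkeeping relies crucially on the properness of $f$ and the local finiteness of $\{q_\alpha\}$.
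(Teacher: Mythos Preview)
Your argument is correct and, for parts (i) and (ii), essentially coincides with the paper's: both use the convex distance (or squared-distance) function from a basepoint in $\tilde M\setminus\tilde S$, identify the foot of perpendicular on each component of $\tilde S$ as the unique place where the topology of a sublevel set changes, and read off a wedge-of-spheres model. One small slip in your (i): the slice of $\tilde S_\alpha$ you remove is an $(n-k_\alpha-1)$-disk, not a $k_\alpha$-disk, since $\tilde S_\alpha$ has \emph{codimension} $k_\alpha+1$; your conclusion that this amounts to a $k_\alpha$-handle attachment is nonetheless correct. The paper phrases (i) as Morse theory on the manifold-with-boundary $Z=\tilde M\setminus\nu(\tilde S)$, finding two boundary critical points per component on the ray from $\ast$ orthogonal to $C$, but this is the same picture.

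The genuine difference is in (iii). The paper's proof is shorter and more direct: lift a loop $\alpha\subset\partial N$ that bounds in $N$ to a loop $\tilde\alpha\subset\partial Z$ that bounds in $Z$; each component of $\partial Z$ is homotopy equivalent to a sphere whose class in $\pi_1(Z)$ is either trivial (codimension $\ge 3$) or a free generator (codimension $2$), so $\tilde\alpha$ already bounds in $\partial Z$ and one pushes down. Your five-lemma route is more structural, requiring two separate injectivity checks ($\pi_1(C)\hookrightarrow\pi_1(M)$ via total geodesy, and the meridian $\mathbb Z\hookrightarrow F$), the second of which ultimately appeals to the same wedge-of-circles description. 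Both are valid; the paper's version avoids setting up the map of extensions. Note also that your (iii) as written treats only codimension-two components; for higher-codimension components the sphere-bundle fiber is simply connected and the statement is immediate, so this is a trivial omission.
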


\begin{proof}
Since $S$ is compact it has positive injectivity radius $i$.
Let $Z$ be the complement of an open $i$-neighborhood of $\tilde S$
in $\tilde M$. Fix a point $\ast$ in the interior of $Z$. 
The distance to $\ast$ is a smooth Morse function on $Z$.
Indeed, since $\sec(\tilde M)\le 0$, the function has no critical points
on the interior of $Z$. 
By first variation formula the
critical points of the function are the points 
where geodesic rays emanating from $\ast$
intersect $\d Z$ orthogonally. Thus if $C$ is a component 
of $\tilde S$, then the $i$-neighborhood of $C$ contains 
precisely two critical points which lie on a unique ray
from $\ast$ that is orthogonal to $C$.
By Morse theory passing through $C$ results in attaching a handle
that ``goes around'' $C$, thus if $C$ has codimension $k+1$
we attach a $k$-handle, so (i) is proved. 

(ii) Since $S$ has codimension $>1$,
all $k$'s are positive, so that $Z$ is connected and homotopy
equivalent to a wedge of spheres with one $k$-sphere for each 
$k$-handle ``going around'' a component of $\d Z$.
Hence $Z$ is aspherical if and only if $Z$ is homotopy equivalent 
to a wedge of circles, or equivalently if and only if each component
of $S$ has codimension two. In this case
the inclusion $M\setminus S\to M$ clearly induces a surjection
$\pi_1(M\setminus S)\to \pi_1(M)$ whose kernel $\pi_1(Z)$ is 
isomorphic to a free group on the set of components of $\d Z$.

(iii)
If a loop $\alpha\subset\d N$ is null-homotopic in $N$, then $\alpha$ 
lifts to a loop $\tilde\alpha\subset\d Z$ that is null-homotopic in $Z$. 
Every component of $\d Z$ is homotopy equivalent to a circle
that represents a nontrivial free generator in $\pi_1(Z)$.
Thus $\tilde\alpha$ must be null-homotopic in $\d Z$, and
this homotopy can be pushed down to $\d N$.
\end{proof}

\section{Curvature of warped product metrics}
\label{app: components-of-curv-tensor}

In this appendix we review (and also correct!) 
some formulas for the curvature tensor of a  
multiply-warped product metric $dr^2+g_r$ that were 
worked out in~\cite[Section 6]{BW}. 

Suppose $g_r$ is a family of metric on a manifold $F$
where $r$ is on an open interval $I$. 
The computation in~\cite[Section 6]{BW}) works 
provided at each point $w$ of $F$ 
there is a basis of vector fields $\{X_i\}$
on a neighborhood $U_w\subset F$  that is $g_r$-orthogonal
for each $r$. We fix one such a basis for each $w$.
Let $h_i(r)=\sqrt{g_r(X_i,X_i)}$ so that $Y_i=X_i/h_i$
form a $g_r$-orthonormal basis on $U_w$ for any $r>0$.
Since $X_i\neq 0$ and $g_r$ is nondegenerate, $h_i>0$ 

To simplify some of the formulas below 
we denote $g(X,Y)$ by $\langle X , Y\rangle$, denote the vector field
$\frac{\d}{\d r}$ by $\ddr$,  and reserve the notation $\frac{\d}{\d r}T$ 
for the partial derivative of the function $T$ by $r$.

A straightforward tedious computation
(done e.g. in~\cite[Section 6]{BW}) yields the following.
\begin{eqnarray}\label{form: curv of warped prod}
& \langle R_g (Y_i,Y_j) Y_j,Y_i\rangle=
\langle R_{g_r}(Y_i,Y_j) Y_j,Y_i\rangle -
\frac{h_i^\prime h_j^\prime}{h_ih_j},\\
& \langle R_g(Y_i,Y_j) Y_l,Y_m\rangle= 
\langle R_{g_r}(Y_i,Y_j) Y_l,Y_m\rangle
\ \ \ \mathrm{if}\ \{i,j\}\neq \{l,m\},\\
  & \langle R_g(Y_i,\ddr)\ddr ), Y_i \rangle=
-\frac{h_i^{\prime\prime}}{h_i},\ \ \ \ \ \langle
R_g(Y_i,\ddr)\ddr ), Y_j \rangle=0\ \ \ \mathrm{if}\ i\neq j.
\smallskip
\end{eqnarray}

The following mixed term is by far the most 
complicated and is usually the hardest to control.
\begin{lem}\label{lem: correct mixed term}
$2\langle R_g(\ddr ,Y_i) Y_j,Y_k\rangle$
equals to
\begin{eqnarray*}
\langle [Y_i,Y_j],Y_k\rangle
\left(\ln\frac{h_k}{h_j}\right)^\prime
+\langle [Y_k,Y_i],Y_j\rangle 
\left(\ln\frac{h_j}{h_k}\right)^\prime
+\langle [Y_k,Y_j],Y_i\rangle
\left(\ln\frac{h_i^2}{h_jh_k}\right)^\prime .
\end{eqnarray*}
\end{lem}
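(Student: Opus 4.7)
The plan is to reduce the computation of $\langle R_g(\ddr,Y_i)Y_j,Y_k\rangle$ to an expression involving the Levi-Civita connection of $g_r$ on $F$, then read off the $r$-dependence explicitly from how the warping functions $h_i$ enter the orthonormal frame $\{Y_i\}$.

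First I would establish two basic identities for the warped Levi-Civita connection $\nabla=\nabla^g$. Because $X_i$ is lifted from $F$ we have $[\ddr,X_i]=0$, so $[\ddr,Y_i]=-\tfrac{h_i'}{h_i}Y_i$. Feeding this into the Koszul formula (together with $\langle Y_i,Y_j\rangle_g=\delta_{ij}$ and $\langle \ddr,Y_i\rangle_g=0$) gives $\langle\nabla_{\ddr}Y_i,Y_j\rangle=0$ for all $i,j$ and $\langle\nabla_{\ddr}Y_i,\ddr\rangle=0$, hence $\nabla_{\ddr}Y_i=0$. Consequently $\nabla_{Y_i}\ddr=[Y_i,\ddr]+\nabla_{\ddr}Y_i=\tfrac{h_i'}{h_i}Y_i$. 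From the definition of the curvature tensor and $[\ddr,Y_i]=-\tfrac{h_i'}{h_i}Y_i$ I then obtain
\begin{equation*}
R_g(\ddr,Y_i)Y_j=\nabla_{\ddr}\nabla_{Y_i}Y_j+\tfrac{h_i'}{h_i}\nabla_{Y_i}Y_j .
\end{equation*}

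Next I would encode the dependence of $\nabla_{Y_i}Y_j$ on $r$ via the structure constants. Writing $[X_i,X_j]=\sum_m a_{ij}^m X_m$ with $a_{ij}^m$ depending only on the $F$-coordinates, a direct expansion gives $[Y_i,Y_j]=\sum_m\tfrac{a_{ij}^m h_m}{h_i h_j}Y_m$, so the structure constants $c_{ij}^k:=\langle[Y_i,Y_j],Y_k\rangle$ of the frame $\{Y_i\}$ are $c_{ij}^k=a_{ij}^k h_k/(h_i h_j)$. Since the three ``differentiation'' terms in Koszul's formula vanish on $g$-orthonormal vector fields, setting $T_{ijk}:=\langle\nabla_{Y_i}Y_j,Y_k\rangle$ one gets
\begin{equation*}
2T_{ijk}=c_{ij}^k-c_{jk}^i+c_{ki}^j .
\end{equation*}
Because $\nabla_{\ddr}Y_k=0$, differentiating $T_{ijk}=\langle\nabla_{Y_i}Y_j,Y_k\rangle$ along $\ddr$ yields $\langle\nabla_{\ddr}\nabla_{Y_i}Y_j,Y_k\rangle=\ddr(T_{ijk})$, so
\begin{equation*}
2\langle R_g(\ddr,Y_i)Y_j,Y_k\rangle=\ddr\bigl(c_{ij}^k-c_{jk}^i+c_{ki}^j\bigr)+\tfrac{h_i'}{h_i}\bigl(c_{ij}^k-c_{jk}^i+c_{ki}^j\bigr).
\end{equation*}

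Finally, since $a_{ij}^m$ is $r$-independent, $\ddr(c_{ij}^k)=c_{ij}^k\bigl(\tfrac{h_k'}{h_k}-\tfrac{h_i'}{h_i}-\tfrac{h_j'}{h_j}\bigr)$, and analogous formulas hold for the cyclic variants. Adding $\tfrac{h_i'}{h_i}$ to each such bracket collapses the three contributions into
\[\ddr(c_{ij}^k)+\tfrac{h_i'}{h_i}c_{ij}^k=c_{ij}^k\bigl(\ln(h_k/h_j)\bigr)',\quad \ddr(c_{ki}^j)+\tfrac{h_i'}{h_i}c_{ki}^j=c_{ki}^j\bigl(\ln(h_j/h_k)\bigr)',\]
\[-\ddr(c_{jk}^i)-\tfrac{h_i'}{h_i}c_{jk}^i=-c_{jk}^i\bigl(\ln(h_i^2/(h_jh_k))\bigr)',\]
and identifying $c_{ij}^k=\langle[Y_i,Y_j],Y_k\rangle$ and $-c_{jk}^i=\langle[Y_k,Y_j],Y_i\rangle$ in the last line delivers the claimed formula. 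The only delicate step is bookkeeping of the cyclic roles of $i,j,k$ and the sign conventions in the Koszul identity; the rest is the observation that the warping functions enter each $c$ multiplicatively, so $r$-derivatives convert cleanly to logarithmic derivatives.
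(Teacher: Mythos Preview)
Your proof is correct and follows essentially the same route as the paper's: both derive $R_g(\ddr,Y_i)Y_j=\nabla_{\ddr}\nabla_{Y_i}Y_j+\tfrac{h_i'}{h_i}\nabla_{Y_i}Y_j$ from $\nabla_{\ddr}Y_i=0$, use Koszul's formula to express $\langle\nabla_{Y_i}Y_j,Y_k\rangle$ in terms of the bracket coefficients, and then differentiate in $r$ using that $c_{ij}^k=a_{ij}^k h_k/(h_ih_j)$ with $a_{ij}^k$ independent of $r$. The only cosmetic difference is that the paper packages the $r$-derivative as $\langle\nabla_{\ddr}[Y_i,Y_j],Y_k\rangle=(\ln\tfrac{h_k}{h_ih_j})'\,c_{ij}^k$ while you differentiate the scalar $c_{ij}^k$ directly; since $\nabla_{\ddr}Y_k=0$ these are the same computation.
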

\begin{proof} 
As noted in~\cite[Section 6]{BW} $\nabla_\ddr Y_i=0$ and 
$[\ddr,Y_i]=-\frac{h_i^\prime}{h_i}Y_i$,
which by the definition of the curvature tensor gives that 
\[
R(\ddr,Y_i)Y_j=\nabla_\ddr\nabla_{Y_i}Y_j + 
\frac{h_i^\prime}{h_i}\nabla_{Y_i}Y_j
\]
Just like in~\cite{BW} Koszul's formula 
can be used to compute that 
\begin{eqnarray}\label{form: nabla_yi yj}
& \nabla_{Y_i} Y_j =-\frac{h_i^\prime}{h_i}\delta_{ij}\ddr 
+\sum_k \frac{Q_{ijk}}{2}Y_k \quad\mathrm{where}\\
& \nonumber Q_{ijk}=
\langle [Y_i,Y_j],Y_k\rangle +
\langle [Y_k,Y_i],Y_j\rangle +
\langle [Y_k,Y_j],Y_i\rangle .
\end{eqnarray}
and then the definition of the curvature tensor
implies that
\begin{eqnarray}\label{form: mixed app}
& 2\langle R_g(\ddr ,Y_i) Y_j,Y_k\rangle=
\frac{h_i^\prime}{h_i} Q_{ijk}+\frac{\d}{\d r} Q_{ijk}.
\end{eqnarray}
Since $\nabla_\ddr Y_k=0$, we know that
$\frac{\d}{\d r} Q_{ijk}$ is the sum of terms of 
the form $\langle\nabla_\ddr [Y_i,Y_j],Y_k\rangle$.
It is noted in~\cite[Section 6]{BW} that 
$[Y_i, Y_j]=\frac{1}{h_ih_j}[X_i, X_j]$
so it is enough to compute $\nabla_\ddr [X_i,X_j]$. 
We can write $[X_i,X_j]=\sum_k c_{ij}^k X_k$, where $c_{ij}^k$
is independent of $r$ (because the identity $[X_i,X_j]=\sum_k c_{ij}^k X_k$
is pullbacked from $TF$ by the projection $F\times I\to F$).
Thus $\frac{\d}{\d r} c_{ij}^k =0$, yet 
one has $c_{ij}^k=\langle [X_i,X_j],X_k\rangle/h_k^2$
because $\langle X_k,X_k\rangle = h_k^2$ .
On the other hand, $\nabla_\ddr Y_k=0$ implies that 
$\nabla_\ddr X_k=\frac{h_k^\prime}{h_k} X_k$,
and in summary $\nabla_\ddr [X_i,X_j]=\sum_k c_{ij}^k\ \nabla_\ddr X_k$, where
\[
c_{ij}^k\ \nabla_\ddr X_k=
c_{ij}^k\ \frac{h_k^\prime}{h_k} X_k =
\langle [X_i,X_j],X_k\rangle\ \frac{h_k^\prime}{h_k^3} X_k=
\langle [Y_i,Y_j],Y_k\rangle\ \frac{h_k^\prime}{h_k} h_i h_j Y_k,
\]
which yields 
\begin{eqnarray}\label{form: app deriv in mixed term}
&\langle\nabla_\ddr  [Y_i,Y_j], Y_k\rangle=
\langle\frac{1}{h_ih_j}\nabla_\ddr [X_i,X_j]+
\left(\frac{1}{h_ih_j}\right)^\prime [X_i, X_j], Y_k\rangle=\\
&\nonumber \left(\frac{h_k^\prime}{h_k} -\frac{h_i^\prime h_j^\prime}{h_ih_j}\right)
\langle [Y_i,Y_j],Y_k\rangle =
\left(\ln\frac{h_k}{h_ih_j}\right)^\prime\langle [Y_i,Y_j],Y_k\rangle, 
\end{eqnarray}
giving the desired formula, after plugging 
(\ref{form: app deriv in mixed term}) into (\ref{form: mixed app})
and collecting terms. 
\end{proof}

\begin{rmk}\label{rmk: bw-bk is ok}
It was stated in~\cite{BK1, BK2} as an immediate implication 
of~\cite[Section 6]{BW} that 
\begin{eqnarray*}
\langle R_g(\ddr ,Y_i) Y_j,Y_k\rangle=
\frac{\left(\ln(h_jh_k)\right)^\prime}{2}
\left(\langle [Y_j,Y_i],Y_k\rangle+\langle
[Y_i,Y_k],Y_j\rangle +\langle [Y_j,Y_k],Y_i\rangle\right),
\end{eqnarray*}
but according to Lemma~\ref{lem: correct mixed term}, 
{\it this formula is incorrect} due to error
in~\cite[Lemma 6.2(1)]{BW}. 
(Note that~\cite[Lemma 6.2(4)]{BW} is also incorrect as a
particular case of~\cite[Lemma 6.2(1)]{BW}).
This erroneous formula was used in~\cite{BW, BK1, BK2} in a crucial way,
yet all the other results in~\cite{BW, BK1, BK2} 
continue to hold without any change with
the correct formula of Lemma~\ref{lem: correct mixed term}, 
because they only depend on the fact that 
$\langle R_g(\ddr ,Y_i) Y_j,Y_k\rangle$ is a 
linear combination of the terms of the form 
$\frac{h_l^\prime}{h_l}\langle [Y_i,Y_j],Y_k\rangle$. 
Specifically, in~\cite{BW} we always
work in the frame $\{Y_k\}$ for which 
$\langle [Y_j,Y_i],Y_k\rangle$ vanishes at the point
where the curvature is computed (see~\cite[Section 7]{BW}), so 
$\langle R_g(\ddr ,Y_i) Y_j,Y_k\rangle$ also vanishes at the point. 
Similarly, in the nearly identical warped product computations 
of~\cite[Section 3]{BK1}, 
and~\cite[Section 10]{BK2} the terms $\frac{h_l^\prime}{h_l}$ 
are uniformly bounded, and
we work with a family of metrics $dr^2+s^2g_r$ 
for which $\langle [Y_j,Y_i],Y_k\rangle$ approaches zero as $s\to\infty$,
so $\langle R_g(\ddr ,Y_i) Y_j,Y_k\rangle$ is 
asymptotically zero, which is what is
needed in~\cite{BK1, BK2}. 
\end{rmk}

\section{Another proof of relative hyperbolicity}
\label{sec: another proof of rh}

This appendix gives another proof of part (1) of
Theorem~\ref{thm: intro-gr-theoretic-cor} via results
of Heintze, Schroeder~\cite{Sch},
Kapovich-Leeb~\cite{KapLee} and Dru{\c{t}}u-Osin-Sapir~\cite{DOS}.

We refer to~\cite{DOS} for basic information on asymptotic cones
and tree-graded spaces.

Suppose that $X$ is a $CAT(0)$ space
with a collection $\{D_i\}$ of open convex subspaces 
of $X$ such that there exists $\e>0$ with the property
that $\e$-neighborhoods of $D_i$'s are disjoint, and
every $2\e$-ball centered outside $\cup_i D_i$ is $CAT(-1)$. 
Fix an arbitrary asymptotic cone $\mathrm{Cone}(X)$ of $X$
(specified by a choice of non-principal ultrafilter, an observation point,
and a sequence of scaling factors), and denote by $\mathcal D$
the collection of all subspaces of $\mathrm{Cone}(X)$ 
that are limits of various subsequences in $\{ D_i\}$.
Suppose that every subspace $D\in\mathcal D$ 
has the following property: given any distinct points
$x,y, z\in\mathcal D$ there is a sequence $z_n\to z$
such that the geodesic triangle with vertices $x, y, z_n$ is open
(where following Kapovich-Leeb we call a geodesic triangle {\it open} 
if any two sides of the triangle intersect only at their common vertex).
Suppose that a group $G$ acts {\it geometrically} (i.e. isometrically,
properly discontinuously, and cocompactly)
on $X$, and that the action permutes $D_i$'s.
With the above assumptions one has
the following.

\begin{thm}\label{thm: kl-ds}
$G$ is hyperbolic relative to the set of stabilizers of 
$D_i$'s.
\end{thm}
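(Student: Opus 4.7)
The plan is to show that the asymptotic cone $\mathrm{Cone}(X)$ is tree-graded in the sense of Dru{\c{t}}u-Sapir with pieces $\mathcal D$, and then to invoke the main characterization of Dru{\c{t}}u-Osin-Sapir~\cite{DOS}: since $G$ acts geometrically on $X$, the group $G$ is quasi-isometric to $X$, and by~\cite[Theorem 8.5]{DOS} relative hyperbolicity of $G$ with respect to the stabilizers of $\{D_i\}$ is equivalent to asymptotic tree-gradedness of $X$ with respect to $\{D_i\}$, which in turn is encoded by the tree-graded structure of $\mathrm{Cone}(X)$ with respect to $\mathcal D$. Throughout I would work in an arbitrary asymptotic cone, as the tree-graded property must be verified for each.

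First I would verify the two tree-graded axioms. That two distinct pieces in $\mathcal D$ meet in at most one point follows from the uniform disjointness of the $\e$-neighborhoods of the $D_i$'s: in the rescaling limit two distinct limit pieces can meet only tangentially, and any two-point intersection would lift, at small enough rescaled scale, to overlapping $\e$-neighborhoods in $X$. For the more delicate axiom that every simple geodesic triangle lies in a single piece, the first step is to observe that $\mathrm{Cone}(X)\setminus\bigcup\mathcal D$ is a real tree: the local $CAT(-1)$ condition on $2\e$-balls outside $\cup_i D_i$ rules out any nontrivial geodesic bigon in the complement of the pieces after rescaling, so geodesics in the cone have unique traces off the pieces and every simple loop must enter some piece.

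Given a nondegenerate simple geodesic triangle $T\subset\mathrm{Cone}(X)$, if its vertices do not lie in a common piece then some side exits the current piece, and the tree structure of the complement forces $T$ to traverse two distinct pieces, producing two cut points on $T$ that violate simplicity. If the three vertices $x,y,z$ lie in a common $D\in\mathcal D$, I would use the open-triangles hypothesis to approximate $z$ by $z_n$ such that the triangle on $x,y,z_n$ has sides meeting only at vertices; the cut-point structure inherited from the real-tree complement then forces each such approximating triangle, and hence $T$, to lie in $D$. Having established tree-gradedness, the peripheral subgroups in the sense of~\cite{DOS} are precisely the stabilizers of the limit pieces, which, by cocompactness of the $G$-action and $G$-invariance of $\{D_i\}$, coincide with the stabilizers of the $D_i$'s themselves.

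The main obstacle I anticipate is precisely the cut-point and open-triangle analysis in the third paragraph: one must rule out simple triangles straddling the boundary of a piece, which is a delicate ultralimit argument of Kapovich-Leeb type~\cite{KapLee}. The open-triangles hypothesis is tailor-made for this obstruction, as it ensures that within each piece the geodesic combinatorics is compatible with a tree-graded structure rather than in tension with it; without it, one could imagine triangles in $D$ whose interior systematically crosses into neighboring pieces, obstructing the verification of the second axiom.
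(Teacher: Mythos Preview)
Your approach is essentially the paper's: show that every asymptotic cone of $X$ is tree-graded with respect to $\mathcal D$, then transfer to $G$ via the orbit-map quasi-isometry and invoke the Dru{\c{t}}u--Osin--Sapir characterization. The paper simply cites~\cite[Section 4.2]{KapLee} for the tree-gradedness (noting their argument goes through verbatim under the stated open-triangles hypothesis), rather than re-sketching it as you do; where the paper is more explicit than your proposal is in the transfer step---it checks that $\{D_i\}$ falls into finitely many $G$-orbits, that each stabilizer $H_j$ acts cocompactly on the closure of $D_{i_j}$, and that the orbit map carries each coset $gH_j$ to within bounded Hausdorff distance of $g(D_{i_j})$, so that left cosets of the $H_j$'s correspond to the $D_i$'s under the quasi-isometry $G\to X$ and~\cite[Theorem 1.11]{DOS} applies.
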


\begin{proof}[Proof of Theorem~\ref{thm: kl-ds}]
In~\cite[Section 4.2]{KapLee} it is proved 
that $\mathrm{Cone}(X)$ is tree-graded with respect to $\mathcal D$.
(The result does not need the existence
of a geometric action of $G$ on $X$ that permutes $D_i$'s,
and is stated in~\cite{KapLee}
only when $D_i$'s are uniform neighborhood of flats, 
in which case each $D$ is a flat in $\mathrm{Cone}(X)$, but
the proof in~\cite{KapLee} works without change).

 Since $\e$-neighborhoods of $D_i$'s are disjoint and $G$ acts geometrically,
$\{D_i\}$ falls into finitely many $G$-orbits.
(Else there would exist an infinite sequence of pairwise
$G$-inequivalent $D_k$'s, so take a point in each, pass to quotient, 
choose a converging subsequence, lift it to $X$, and conclude that 
after changing $D_k$'s within their $G$-orbits, 
we can assume that they intersect an arbitrary 
small ball around some point of $X$, contradicting that 
$D_i$'s are $\e$-separated).
Pick a representative in each orbit and denote the representatives
by $D_{i_1},\dots , D_{i_k}$. Denote by $H_j$ the stabilizer
of $D_{i_j}$ in $G$. 

The $H_j$-action on the closure of 
$D_{i_j}$ is geometric. (The action is clearly properly 
discontinuous and isometric, and $D_{i_j}/H_j$ is precompact,
because by construction the inclusion 
$D_{i_j}\to X$ descends to the inclusion $D_{i_j}/H_j\to X/G$,
and $X/G$ is compact).

Fix a base point $\ast$ in $X$, and consider the orbit map
$q_\ast\co G\to X$, which is a quasi-isometry for any word metric on $G$. 
Then there exists $r>0$ such that
$q_\ast(H_{j})$  
is $r$-close to $D_{i_j}$ in the Hausdorff topology for each 
$j=1,\dots , k$.
(For if $\ast_j$ is the nearest point
projection of $\ast$ onto $D_{i_j}$, then the $H_j$-orbit of $\ast_j$ 
is within finite Hausdorff distance from $D_{i_j}$ as well as from
$q_\ast(H_{j})$). 
Therefore, $q_\ast (g H_j)$ and $g(D_{i_j})$ are $r$-close  
in the Hausdorff topology. 

Thus the bi-Lipschitz homeomorphism of asymptotic 
cones $\mathrm{Cone}(G)\to \mathrm{Cone}(X)$ induced by  $q_\ast$ 
maps limits of sequences of left cosets of $H_j$'s homeomorphically onto 
the corresponding elements of $\mathcal D$, which are pieces
of the tree-graded space structure on $X$. For geodesic metric spaces
the property of being tree-graded
is a preserved under or homeomorphism~\cite[Remark 2.18]{DOS},
so $\mathrm{Cone}(G)$ is tree-graded with respect to
limits of sequences of left cosets of $H_j$'s.
By~\cite[Theorem 1.11]{DOS} this
implies that $G$ is hyperbolic relative to the set
of conjugates of $H_1,\dots, H_k$, which equals to the
set of stabilizers of $D_i$'s in $G$.
\end{proof}

Finally, we are ready to give another proof of part (1) of 
Theorem~\ref{thm: intro-gr-theoretic-cor}.
Suppose $M$ is a complete finite volume real hyperbolic $n$-manifold, 
and $S$ is a compact totally geodesic submanifold of codimension two.
As in Remark~\ref{rmk: HSF metrics} 
we give $N$ the Heintze-Schroeder metric with $\sec\le 0$ in which $\d N$
is totally geodesic and  $\sec< 0$ on the interior of $N$.
Let $G:=\pi_1(N)$ and $X$ be the universal cover of $N$,
so that $G$ acts geometrically on the $CAT(0)$ space $X$.
Let $\iota$ be the normal injectivity radius of $\d N$. 
Fix a positive $\e<\frac{\iota}{4}$,  
consider the preimage of the $\e+\frac{\iota}{2}$-neighborhood
of $\d N$ under the covering $X\to N$,
and denote its path-components by $D_i$'s. Since
$\d N$ is locally convex and $X$ is $CAT(0)$, each $D_i$ is convex.
The $\e$-neighborhoods of $D_i$'s are disjoint, and
the sectional curvature is negative on any closed
$2\e$-ball centered outside $\cup_i D_i$, so 
after rescaling we can assume that each
$2\e$-ball centered outside $\cup_i D_i$ is $CAT(-1)$. 
Since $\d N$ is totally geodesic, it is $\pi_1$-incompressible.
Denote the subgroups of $G$ corresponding to the
fundamental groups of 
the path-components of $\d N$ by $H_1,\dots , H_k$;
the stabilizer of each $D_i$ is 
conjugate to some $H_{j(i)}$.
The universal cover of every component of $\d N$
is isometric to $\mathbb R^{n-1}$ or to $\mathbb R\times\rhm$, hence
their limits in $\mathrm{Cone}(X)$ are isometric to an asymptotic
cone of a fixed $D_i$, which is either $\mathbb R^{n-1}$, or 
the product of $\mathbb R$ and a metric tree $T$. 
Applying Lemma~\ref{lem: tree cross R} below,
we see that all assumptions of Theorem~\ref{thm: kl-ds} hold. 

\begin{lem}  \label{lem: tree cross R}
If $T$ is a metric tree, then
for any geodesic triangle in  $\mathbb R\times T$ 
with vertices $x, y, z$ there is a sequence $z_n\to z$
such that the geodesic triangle with vertices $x, y, z_n$ is open.
\end{lem}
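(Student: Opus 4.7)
The plan is to parametrize each side of the triangle in $\mathbb R\times T$ explicitly, identify the precise condition under which two sides emanating from a common vertex share a non-trivial initial segment, and then perturb $z$ generically to destroy these conditions.

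A geodesic in $\mathbb R\times T$ from $(s_0,t_0)$ to $(s_1,t_1)$ is the product curve $\lambda\mapsto(s_0+\lambda(s_1-s_0),\gamma_{t_0t_1}(\lambda))$ for $\lambda\in[0,1]$, where $\gamma_{t_0t_1}$ is the affinely parametrized $T$-geodesic from $t_0$ to $t_1$. Writing $x=(a,p)$, $y=(b,q)$, $z=(c,r)$ and letting $m$ be the median of $\{p,q,r\}$ in $T$, at vertex $x$ the sides $[xy]$ and $[xz]$ have $T$-projections $[p,q]$ and $[p,r]$ sharing the initial segment $[p,m]$. A short calculation shows that the two sides coincide throughout $[p,m]$ if and only if $m\ne p$ and the slope identity $(b-a)/d_T(p,q)=(c-a)/d_T(p,r)$ holds; otherwise they meet only at $x$. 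Analogous slope equations at $y$ and at $z$ govern openness at those vertices.

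For the perturbation I would first try $z_n=(c_n,r)$ with $c_n\to c$: the median $m$ is then unchanged, so each of the three slope conditions at $x$, $y$, $z_n$ either holds vacuously or rules out a single forbidden value of $c_n$. Choosing $c_n$ away from these finitely many values thus produces an open triangle in the generic case.

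The main obstacle is the symmetric configuration $d_T(r,p)=d_T(r,q)$ combined with $a=b$: the slope condition at $z_n$ then reduces to the identity $a=b$, so the triangle fails to be open at $z_n$ for every choice of $c_n$. Since $m$ is forced to be the midpoint of $[p,q]$ in this case, perturbing $r_n$ along $[r,m]$ or in any other direction at $r$ preserves the equality $d_T(r_n,p)=d_T(r_n,q)$. Resolving this degenerate case by a careful joint perturbation of $(c_n,r_n)$ that exploits the branching structure of $T$ is where I anticipate the main technical difficulty.
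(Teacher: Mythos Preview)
Your slope analysis is correct, and the symmetric configuration you isolated is not merely a ``technical difficulty'': it is a genuine counterexample to the lemma as stated. Take $T$ to be a tripod with centre $O$ and endpoints $P,Q,R$ at equal distance from $O$, and set $x=(0,P)$, $y=(0,Q)$, $z=(0,R)$. Any $z_n\to z$ eventually has tree--coordinate $r_n$ in the open $R$--branch. The isometry of $\mathbb R\times T$ that swaps the $P$-- and $Q$--branches fixes the $R$--strip pointwise and interchanges $[z_n,x]$ with $[z_n,y]$; hence the portions of these two sides lying in the $R$--strip coincide, so the triangle $x,y,z_n$ is never open at $z_n$. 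No joint perturbation of $(c_n,r_n)$ can break this, because (as you already observed) every $r_n$ near $R$ satisfies $d_T(r_n,p)=d_T(r_n,q)$, and the branching structure of $T$ is irrelevant: any branch at or near $R$ still lies beyond $O$ as seen from $P$ and $Q$.

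The paper's own proof has the very same gap. Its criterion is that the three segments $[x,y]$, $[x,z_n]$, $[y,z_n]$ cross the line $\mathbb R\times\{\bar m\}$ at \emph{distinct} points, and it claims this is achieved by keeping $z_n$ off the two lines extending $[x,m]$ and $[y,m]$ into the $z$--strip. That condition only forces the crossing point of $[x,y]$ to differ from each of the other two; it does nothing to separate the crossing points of $[x,z_n]$ and $[y,z_n]$ from one another. In the symmetric example those two extended lines coincide, and for every $z_n$ off them the segments $[x,z_n]$ and $[y,z_n]$ still meet the central line at the same point. So the paper's argument does not establish openness at $z_n$ either.

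In short, you located exactly the failure point; it cannot be repaired because the lemma is false for general metric trees (including the $\mathbb R$--trees arising as asymptotic cones of $\mathbf H^{n-2}$ for $n\ge 4$). The appendix in which this lemma appears offers only an alternative route to a result already proved elsewhere in the paper, so the main theorems are unaffected, but the lemma itself and its use in that appendix require correction---most likely by reformulating the approximation property actually needed in the Kapovich--Leeb argument rather than by trying to salvage the present statement.
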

\begin{proof}
Denote the projection of $w$ to the $T$-factor by $\bar w$.
If $\bar z$ lies on a line through
$\bar x,\bar y$, then $x,y,z$ lie on a (flat) plane
in $\mathbb R\times T$, so we can just pick $z_n$
outside the line through $x,y$ in the plane. 
Otherwise $\bar z,\bar x,\bar y$ are the endpoints
of a tripod $\Upsilon$; denote by $\bar m$ the
point on $\Upsilon$ of valency $3$. The product
$\mathbb R\times\Upsilon$ is the union of three
flat strips joined at the line $\mathbb R\times\{\bar m\}$;
let $m$ be the point where this line intersects 
the segment $[x,y]$.
It suffices to choose $z_n$ so that the intersection
of $\mathbb R\times\{\bar m\}$ with the segments $[x,y]$, $[x,z_n]$,  $[y,z_n]$
are distinct points, and this can be arranged
by choosing $z_n\to z$ outside the straight lines 
that extend the segments
$[x,m]$, $[y,m]$ into the strip containing $z$.
\end{proof}

{\bf Acknowledgments.}
This paper is dedicated to Thomas Farrell and Lowell Jones
in appreciation of their seminal work on topological rigidity; 
in fact this project was initiated because I got invited to
the 2007 Morelia Conference in honor of Farrell and Jones.
It is a pleasure to thank Ian Agol, Ken Baker, Oleg V. Belegradek,
David Fisher, Vitali Kapovitch, Denis Osin, and the referees 
for discussions and communications
relevant to this work. 

\small
\bibliographystyle{amsalpha}
\bibliography{rh-warp-corrected16july2010}

\newcommand{\etalchar}[1]{$^{#1}$}
\providecommand{\bysame}{\leavevmode\hbox to3em{\hrulefill}\thinspace}
\providecommand{\MR}{\relax\ifhmode\unskip\space\fi MR }
\providecommand{\MRhref}[2]{%
  \href{http://www.ams.org/mathscinet-getitem?mr=#1}{#2}
}
\providecommand{\href}[2]{#2}
\begin{thebibliography}{ABC{\etalchar{+}}96}

\bibitem[ABC{\etalchar{+}}96]{ABCKT}
J.~Amor{\'o}s, M.~Burger, K.~Corlette, D.~Kotschick, and D.~Toledo,
  \emph{Fundamental groups of compact {K}\"ahler manifolds}, Mathematical
  Surveys and Monographs, vol.~44, American Mathematical Society, Providence,
  RI, 1996.

\bibitem[ACT02]{ACT}
D.~Allcock, J.~A. Carlson, and D.~Toledo, \emph{Orthogonal complex hyperbolic
  arrangements}, Symposium in Honor of C. H. Clemens (Salt Lake City, UT,
  2000), Contemp. Math., vol. 312, Amer. Math. Soc., Providence, RI, 2002,
  pp.~1--8.

\bibitem[AMO07]{AMO}
G.~Arzhantseva, A.~Minasyan, and D.~Osin, \emph{The {SQ}-universality and
  residual properties of relatively hyperbolic groups}, J. Algebra \textbf{315}
  (2007), no.~1, 165--177.

\bibitem[BD06]{BelDra-ext}
G.~C. Bell and A.~N. Dranishnikov, \emph{A {H}urewicz-type theorem for
  asymptotic dimension and applications to geometric group theory}, Trans.
  Amer. Math. Soc. \textbf{358} (2006), no.~11, 4749--4764 (electronic).

\bibitem[BD08]{BelDra}
G.~Bell and A.~Dranishnikov, \emph{Asymptotic dimension}, Topology Appl.
  \textbf{155} (2008), no.~12, 1265--1296.

\bibitem[BDM]{BDM}
J.~Behrstock, C.~Dru{\c{t}}u, and L.~Mosher, \emph{Thick metric spaces,
  relative hyperbolicity, and quasi-isometric rigidity}, arXiv:math/0512592v4.

\bibitem[Bel]{Bel-ch-warp}
I.~Belegradek, \emph{Complex hyperbolic hyperplane complements},
  arXiv:0711.5001v1 [math.GR].

\bibitem[Bel07]{Bel-HP}
\bysame, \emph{Aspherical manifolds with relatively hyperbolic fundamental
  groups}, Geom. Dedicata \textbf{129} (2007), 119--144.

\bibitem[Bes87]{Bes}
A.~L. Besse, \emph{Einstein manifolds}, Ergebnisse der Mathematik und ihrer
  Grenzgebiete (3), vol.~10, Springer-Verlag, Berlin, 1987.

\bibitem[BH99]{BH}
M.~R. Bridson and A.~Haefliger, \emph{Metric spaces of non-positive curvature},
  Grundlehren der Mathematischen Wissenschaften, vol. 319, Springer-Verlag,
  Berlin, 1999.

\bibitem[BK05]{BK1}
I.~Belegradek and V.~Kapovitch, \emph{Pinching estimates for negatively curved
  manifolds with nilpotent fundamental groups}, Geom. Funct. Anal. \textbf{15}
  (2005), no.~5, 929--938.

\bibitem[BK06]{BK2}
\bysame, \emph{Classification of negatively pinched manifolds with amenable
  fundamental groups}, Acta Math. \textbf{196} (2006), no.~2, 229--260.

\bibitem[Bow]{Bow-rel}
B.~H. Bowditch, \emph{Relatively hyperbolic groups}, Southampton preprint,
  1999, http://www.warwick.ac.uk/~masgak/preprints.html.

\bibitem[Bow93]{Bow-par}
\bysame, \emph{Discrete parabolic groups}, J. Differential Geom. \textbf{38}
  (1993), no.~3, 559--583.

\bibitem[Bow99]{Bow-conv}
\bysame, \emph{Convergence groups and configuration spaces}, Geometric group
  theory down under (Canberra, 1996), de Gruyter, Berlin, 1999, pp.~23--54.

\bibitem[Bro94]{Bro}
K.~S. Brown, \emph{Cohomology of groups}, Graduate Texts in Mathematics,
  vol.~87, Springer-Verlag, New York, 1994, Corrected reprint of the 1982
  original.

\bibitem[BS06]{BadSha}
U.~Bader and Y.~Shalom, \emph{Factor and normal subgroup theorems for lattices
  in products of groups}, Invent. Math. \textbf{163} (2006), no.~2, 415--454.

\bibitem[Bum05]{Bum-rel}
I.~Bumagin, \emph{On definitions of relatively hyperbolic groups}, Geometric
  methods in group theory, Contemp. Math., vol. 372, Amer. Math. Soc.,
  Providence, RI, 2005, pp.~189--196.

\bibitem[BW04]{BW}
I.~Belegradek and G.~Wei, \emph{Metrics of positive {R}icci curvature on
  bundles}, Int. Math. Res. Not. (2004), no.~57, 3079--3096.

\bibitem[CD95]{ChaDav}
R.~M. Charney and M.~W. Davis, \emph{Strict hyperbolization}, Topology
  \textbf{34} (1995), no.~2, 329--350.

\bibitem[CT89]{CarTol}
J.~A. Carlson and D.~Toledo, \emph{Harmonic mappings of {K}\"ahler manifolds to
  locally symmetric spaces}, Inst. Hautes \'Etudes Sci. Publ. Math. (1989),
  no.~69, 173--201.

\bibitem[Dah03]{Dah-rel}
F.~Dahmani, \emph{Les groupes relativement hyperboliques et leurs bords}, Ph.D.
  thesis, Universit{\'{e}} Louis Pasteur, Strasbourg, 2003.

\bibitem[DJ91]{DavJan}
M.~W. Davis and T.~Januszkiewicz, \emph{Hyperbolization of polyhedra}, J.
  Differential Geom. \textbf{34} (1991), no.~2, 347--388.

\bibitem[dlHV89]{HarVal}
P.~de~la Harpe and A.~Valette, \emph{La propri\'et\'e {$(T)$} de {K}azhdan pour
  les groupes localement compacts (avec un appendice de {M}arc {B}urger)},
  Ast\'erisque (1989), no.~175, 158, With an appendix by M. Burger.

\bibitem[Dru]{Dru-qi}
C.~Dru{\c{t}}u, \emph{Relatively hyperbolic groups: geometry and
  quasi-isometric invariance}, arXiv:math/0605211v4.

\bibitem[DS05a]{DruSap-RD}
C.~Dru{\c{t}}u and M.~Sapir, \emph{Relatively hyperbolic groups with rapid
  decay property}, Int. Math. Res. Not. (2005), no.~19, 1181--1194.

\bibitem[DS05b]{DOS}
\bysame, \emph{Tree-graded spaces and asymptotic cones of groups}, Topology
  \textbf{44} (2005), no.~5, 959--1058, With an appendix by Denis Osin and
  Sapir.

\bibitem[DS08]{DruSap-rips}
C.~Dru{\c{t}}u and M.~V. Sapir, \emph{Groups acting on tree-graded spaces and
  splittings of relatively hyperbolic groups}, Adv. Math. \textbf{217} (2008),
  no.~3, 1313--1367.

\bibitem[ECH{\etalchar{+}}92]{ECHLPT}
D.~B.~A. Epstein, J.~W. Cannon, D.~F. Holt, S.~V.~F. Levy, M.~S. Paterson, and
  W.~P. Thurston, \emph{Word processing in groups}, Jones and Bartlett
  Publishers, Boston, MA, 1992.

\bibitem[Far97]{Far-lat}
B.~Farb, \emph{The quasi-isometry classification of lattices in semisimple
  {L}ie groups}, Math. Res. Lett. \textbf{4} (1997), no.~5, 705--717.

\bibitem[Far98]{Far-rel}
\bysame, \emph{Relatively hyperbolic groups}, Geom. Funct. Anal. \textbf{8}
  (1998), no.~5, 810--840.

\bibitem[FJ98]{FJ-A-reg}
F.~T. Farrell and L.~E. Jones, \emph{Rigidity for aspherical manifolds with
  {$\pi\sb 1\subset{\rm GL}\sb m({\bf R})$}}, Asian J. Math. \textbf{2} (1998),
  no.~2, 215--262.

\bibitem[Fre97]{Fre}
E.~M. Freden, \emph{Properties of convergence groups and spaces}, Conform.
  Geom. Dyn. \textbf{1} (1997), 13--23 (electronic).

\bibitem[Fuj88]{Fuj}
K.~Fujiwara, \emph{A construction of negatively curved manifolds}, Proc. Japan
  Acad. Ser. A Math. Sci. \textbf{64} (1988), no.~9, 352--355.

\bibitem[FW]{FarWei}
B.~Farb and S.~Weinberger, \emph{Isometries, rigidity, and universal covers},
  arXiv:math/0506544v2.

\bibitem[GM08]{GroMan}
D.~Groves and J.~F. Manning, \emph{Dehn filling in relatively hyperbolic
  groups}, Israel J. Math. \textbf{168} (2008), no.~1, 317--429.

\bibitem[Gro87]{Gro-hgr}
M.~Gromov, \emph{Hyperbolic groups}, Essays in group theory, Math. Sci. Res.
  Inst. Publ., vol.~8, Springer, 1987, pp.~75--263.

\bibitem[Hel01]{Heg}
S.~Helgason, \emph{Differential geometry, {L}ie groups, and symmetric spaces},
  Graduate Studies in Mathematics, vol.~34, American Mathematical Society,
  Providence, RI, 2001, Corrected reprint of the 1978 original.

\bibitem[Hru]{Hru}
G.~C. Hruska, \emph{Relative hyperbolicity and relative quasiconvexity for
  countable groups}, arXiv:0801.4596v2 [math.GR].

\bibitem[Jol90]{Jol}
P.~Jolissaint, \emph{Rapidly decreasing functions in reduced {$C\sp
  *$}-algebras of groups}, Trans. Amer. Math. Soc. \textbf{317} (1990), no.~1,
  167--196.

\bibitem[JY91]{JosYau}
J.~Jost and S.-T. Yau, \emph{Harmonic maps and group representations},
  Differential geometry, Pitman Monogr. Surveys Pure Appl. Math., vol.~52,
  Longman Sci. Tech., Harlow, 1991, pp.~241--259.

\bibitem[KL95]{KapLee}
M.~Kapovich and B.~Leeb, \emph{On asymptotic cones and quasi-isometry classes
  of fundamental groups of {$3$}-manifolds}, Geom. Funct. Anal. \textbf{5}
  (1995), no.~3, 582--603.

\bibitem[KN63]{KN}
S.~Kobayashi and K.~Nomizu, \emph{Foundations of differential geometry. {V}ol
  {I}}, Interscience Publishers, a division of John Wiley \& Sons, New
  York-Lond on, 1963.

\bibitem[Lab91]{Lab}
F.~Labourie, \emph{Existence d'applications harmoniques tordues \`a valeurs
  dans les vari\'et\'es \`a courbure n\'egative}, Proc. Amer. Math. Soc.
  \textbf{111} (1991), no.~3, 877--882.

\bibitem[Laf02]{Laf-BC}
V.~Lafforgue, \emph{{$K$}-th\'eorie bivariante pour les alg\`ebres de {B}anach
  et conjecture de {B}aum-{C}onnes}, Invent. Math. \textbf{149} (2002), no.~1,
  1--95.

\bibitem[Lub91]{Lub-rk1}
A.~Lubotzky, \emph{Lattices in rank one {L}ie groups over local fields}, Geom.
  Funct. Anal. \textbf{1} (1991), no.~4, 406--431.

\bibitem[Mar91]{Mar-book}
G.~A. Margulis, \emph{Discrete subgroups of semisimple {L}ie groups},
  Ergebnisse der Mathematik und ihrer Grenzgebiete (3), vol.~17,
  Springer-Verlag, Berlin, 1991.

\bibitem[MY]{MinYam}
I.~Mineyev and A.~Yaman, \emph{Relative hyperbolicity and bounded cohomology},
  preprint, 2007, http://www.math.uiuc.edu/~mineyev/math/.

\bibitem[Nos92]{Nos}
G.~A. Noskov, \emph{Algebras of rapidly decreasing functions on groups and
  cocycles of polynomial growth}, Sibirsk. Mat. Zh. \textbf{33} (1992), no.~4,
  97--103, 221.

\bibitem[NR97]{NeuRee}
W.~D. Neumann and L.~Reeves, \emph{Central extensions of word hyperbolic
  groups}, Ann. of Math. (2) \textbf{145} (1997), no.~1, 183--192.

\bibitem[Osi05]{Osi-asydim}
D.~V. Osin, \emph{Asymptotic dimension of relatively hyperbolic groups}, Int.
  Math. Res. Not. (2005), no.~35, 2143--2161.

\bibitem[Osi06a]{Osi-elem}
\bysame, \emph{Elementary subgroups of relatively hyperbolic groups and bounded
  generation}, Internat. J. Algebra Comput. \textbf{16} (2006), no.~1, 99--118.

\bibitem[Osi06b]{Osi-rel}
\bysame, \emph{Relatively hyperbolic groups: intrinsic geometry, algebraic
  properties, and algorithmic problems}, Mem. Amer. Math. Soc. \textbf{179}
  (2006), no.~843, vi+100.

\bibitem[Osi07]{Osi-periph}
\bysame, \emph{Peripheral fillings of relatively hyperbolic groups}, Invent.
  Math. \textbf{167} (2007), no.~2, 295--326. \MR{MR2270456}

\bibitem[Pet06]{Pet}
P.~Petersen, \emph{Riemannian geometry}, second ed., Graduate Texts in
  Mathematics, vol. 171, Springer, New York, 2006.

\bibitem[Rag72]{Rag-book}
M.~S. Raghunathan, \emph{Discrete subgroups of {L}ie groups}, Springer-Verlag,
  New York, 1972, Ergebnisse der Mathematik und ihrer Grenzgebiete, Band 68.

\bibitem[Rag84]{Rag}
\bysame, \emph{Torsion in cocompact lattices in coverings of {${\rm
  Spin}(2,\,n)$}}, Math. Ann. \textbf{266} (1984), no.~4, 403--419.

\bibitem[Reb01]{Reb}
D.~Y. Rebbechi, \emph{Algorithmic properties of relatively hyperbolic groups},
  Ph.D. thesis, Rutgers Newark, 2001.

\bibitem[Roe05]{Roe}
J.~Roe, \emph{Hyperbolic groups have finite asymptotic dimension}, Proc. Amer.
  Math. Soc. \textbf{133} (2005), no.~9, 2489--2490 (electronic).

\bibitem[Sak96]{Sak}
T.~Sakai, \emph{Riemannian geometry}, Translations of Mathematical Monographs,
  vol. 149, American Mathematical Society, Providence, RI, 1996, Translated
  from the 1992 Japanese original by the author.

\bibitem[Sch91]{Sch}
V.~Schroeder, \emph{Analytic manifolds of nonpositive curvature with higher
  rank subspaces}, Arch. Math. (Basel) \textbf{56} (1991), no.~1, 81--85.

\bibitem[Sel99]{Sel}
Z.~Sela, \emph{Endomorphisms of hyperbolic groups. {I}. {T}he {H}opf property},
  Topology \textbf{38} (1999), no.~2, 301--321.

\bibitem[Szc98]{Szc}
A.~Szczepa{\'n}ski, \emph{Relatively hyperbolic groups}, Michigan Math. J.
  \textbf{45} (1998), no.~3, 611--618.

\bibitem[Tol93]{Tol-nrf}
D.~Toledo, \emph{Projective varieties with non-residually finite fundamental
  group}, Inst. Hautes \'Etudes Sci. Publ. Math. (1993), no.~77, 103--119.

\bibitem[Tuk94]{Tuk}
P.~Tukia, \emph{Convergence groups and {G}romov's metric hyperbolic spaces},
  New Zealand J. Math. \textbf{23} (1994), no.~2, 157--187.

\bibitem[Wit]{Wit-book}
D.~Witte, \emph{Introduction to arithmetic groups}, arXiv:math/0106063v2.

\bibitem[Yam04]{Yam-rel}
A.~Yaman, \emph{A topological characterisation of relatively hyperbolic
  groups}, J. Reine Angew. Math. \textbf{566} (2004), 41--89.

\end{thebibliography}
\end{document}